\newtheorem{definition}{Definition}[section]
\newtheorem{theorem}{Theorem}[section]
\newtheorem{algorithm}[theorem]{Algorithm}
\newtheorem{corollary}[theorem]{Corollary}
\newtheorem{lemma}[theorem]{Lemma}
\newtheorem{proposition}[theorem]{Proposition}
\begin{document}
\title[MOR cryptosystem]{The MOR cryptosystem and finite $p$-groups}
\author[Mahalanobis]{Ayan Mahalanobis}
\address{IISER Pune, Dr.~Homi Bhabha Road, Pashan Pune-411008, India}
\date{\today}
\email{ayan.mahalanobis@gmail.com}
\keywords{MOR cryptosystem, finite $p$-groups, the discrete
 logarithm problem}
\subjclass[2010]{94A60; 20D15}
\thanks{This research was supported by a NBHM research grant}
\begin{abstract} 
The ElGamal cryptosystem is the most widely used public key cryptosystem. It uses the discrete logarithm problem as the cryptographic primitive. The MOR cryptosystem is a similar cryptosystem. It uses the discrete logarithm problem in the automorphism group as the cryptographic primitive. In this paper, we study the MOR cryptosystem for finite $p$-groups. The study is complete for $p^\prime$-automorphisms. For $p$-automorphisms there are some interesting open problems.
\end{abstract}
\maketitle
\section[Introduction]{Introduction}
This is a study of the MOR cryptosystem using finite $p$-groups. Similar studies were done by this author ~\cite{ayan3,ayan4}. The MOR cryptosystem, that we are going to describe in details shortly, works with the automorphism group of a group. As a matter of fact, we do not even need a group. Any finitely presented structure on which automorphisms can be defined will do. We can define the MOR cryptosystem on that structure. However, a MOR cryptosystem might not be secure or implementation-friendly. So this paper can be seen as a search for favorable groups for the MOR cryptosystem in the class of finite $p$-groups.

Once we decide that we will look into the class of $p$-groups, an obvious question surfaces. Are there $p$-groups on which the cryptosystem is secure? Once the answer is yes, then is it any better than the existing one? So we have three questions is front of us: 
\begin{description}
\item[1] Are there favorable $p$-groups? 
\item[2] Is the cryptosystem secure\footnote{There are many different definitions of security, we use the basic one -- find $m$, from the automorphism $\phi$ and its power $\phi^m$.} on those groups? 
\item[3] Is the cryptosystem faster on those groups compared to a suitably defined ElGamal cryptosystem?
\end{description}
To answer these questions, we had to divide the automorphisms in two different classes. One, $p$-automorphisms and the other $p^\prime$-automorphisms. For $p^\prime$-automorphisms we show that there are secure MOR cryptosystems on a $p$-group. However, they offer no advantage than working with matrices over the finite field $\mathbb{F}_p$. So, after reading this paper, one might argue and rightfully so: instead of using $p^\prime$-automorphisms and $p$-group, why not just use matrices of the right size? 

The case for $p$-automorphisms is little complicated and we say upfront that we have not been able to analyze it completely. The case of $p$-automorphisms break down into two sub-cases and we were able to deal with one easily. The other case is very interesting and we were able to shed some light into that with an example. However, a detailed analysis is missing and we leave it as ongoing research. The situation with $p$-automorphisms compared to $p^\prime$-automorphisms is in many ways similar to the modular representation theory compared to the ordinary representation theory. The later is much easier to deal with than the former.

\section{Definitions and Notations}
Most of the definitions used in this paper are standard and in Gorenstein~\cite{gorenstein}. However, we define a few of them for the convenience of the reader. All groups in this paper are finite. We use $p$ for a prime and $q$ for a prime-power.
\begin{definition}[$p^\prime$-automorphisms and $p$-automorphisms]
An automorphism $\phi$ of a $p$-group $G$ is a $p$-automorphism if its order is power of $p$ and $p^\prime$-automorphism if its order is coprime to $p$.
\end{definition}
In general, it is not true that an automorphism is either a $p$-automorphism or a $p^\prime$-automorphism. However, for the purpose of understanding the security of a MOR cryptosystem, due to the Pohlig-Hellman algorithm ~\cite[Section 2.9]{silverbook}, an automorphism is either a $p$-automorphism or a $p^\prime$-automorphism.
\begin{definition}[Special $p$-group]
Usually, a special $p$-group is defined to be a $p$-group such that $\mathcal{Z}(G)=G^\prime=\Phi(G)$ and is elementary-abelian. Here $G^\prime$, $\mathcal{Z}(G)$ and $\Phi(G)$ are the commutator subgroup, the center and the Frattini subgroup respectively. However, it is not very hard to show that the elemetary-abelian part is redundant.
\end{definition}
\begin{definition}[Favorable $p$-group]
A $p$-group $G$ is called a favorable $p$-group, if there is a non-identity $p^\prime$-automorphism $\phi$ of the group, such that, if the automorphism fixes a proper subgroup $H$ of $G$, it is the identity on $H$. 
\end{definition}
A good example of a favorable $p$-group is the elementary-abelian $p$-group, denoted by $G$. Any automorphism of that can be realized as a matrix. If the characteristic polynomial of an automorphism $\phi$ is irreducible, then there are no $\phi$-invariant proper subgroups of $G$. So the above condition is true vacuously.

A curious reader might find the requirement ``$p^\prime$-automorphism $\phi$'' unnecessary. The reason for the restriction is, for $p^\prime$-automorphism favorable $p$-groups is the right notion to look at. If there is a subgroup that is fixed by $\phi$, one can study the discrete logarithm problem on the action of the automorphism on that subgroup, unless the automorphism is the identity on that subgroup. We will see, in the case of $p^\prime$-automorphisms, the discrete logarithm problem in the automorphism group translates to the discrete logarithm problem in non-singular matrices. In the case of $p$-automorphisms, it is not clear if the notion of favorable $p$-group is the best way to go. We simply don't have enough examples of secure MOR cryptosystem using $p$-automorphisms of $p$-groups yet. So we refrain ourselves from defining favorable $p$-groups for $p$-automorphisms.
\section{The MOR cryptosystem}
In this section, we provide a somewhat detailed description of a small but important portion of \emph{public key cryptography}. We start with a cryptographic primitive -- \emph{the discrete logarithm problem}. The standard reference for public key cryptography is Hoffstein et.~al.~\cite{silverbook}.
\begin{definition}[The discrete logarithm problem]
Let $G=\langle g\rangle$ be a finite cyclic group of prime order. We are given $g$ and $g^m$ for some $m\in\mathbb{N}$. The discrete logarithm problem is to find the smallest $m$.  
\end{definition}
The discrete logarithm problem is neither secure or insecure. It being secure or insecure is a property of the presentation of the group. The property of being secure or insecure is not a group theoretic property, it is not invariant under isomorphism.

The discrete logarithm problem is the easiest in prime subgroups of $(\mathbb{Z}_n,+)$ and is considered secure in prime subgroups of the multiplicative group of a finite field $\mathbb{F}_q^\times$ and is considered really secure in a prime order subgroup of the rational points of an elliptic curve. The difference in security between finite fields and points on elliptic curve is, there is no known sub-exponential attack against the elliptic curves. 

A concept related to the discrete logarithm problem is the \textbf{Diffie-Hellman problem}. We have the same $G$ as before, this problem is: given $g$, $g^{m^\prime}$ and $g^{m^{\prime\prime}}$ compute $g^{m^\prime m^{\prime\prime}}$. It is clear that if we know how to solve the discrete logarithm problem, i.e., we can find $m^\prime$ (or $m^{\prime\prime}$), we can then solve the Diffie-Hellman problem. The reverse direction is not known.

The most popular and prolific public key cryptosystem is the \emph{ElGamal cryptosystem}. It works in any cyclic subgroup of a group $G$. However, it might not be secure in any group.
\subsection{Description of the ElGamal cryptosystem}
\begin{description}\label{keyex1}
\item[Private Key] $m$, $m\in\mathbb{N}$.
\item[Public Key] $g$ and $g^m$.
\end{description}
\paragraph{\textbf{Encryption}}
\begin{description}
\item[a] To send a message (plaintext) $a\in G$ Bob computes $g^r$
  and $g^{mr}$ for a random $r\in\mathbb{N}$.
\item[b] The ciphertext is $\left(g^r,g^{mr}a\right)$.
\end{description}
\paragraph{\textbf{Decryption}}
\begin{description}
\item[a] Alice knows $m$, so if she receives the ciphertext
  $\left(g^r,g^{mr}a\right)$, she computes $g^{mr}$ from $g^r$ and
  then $g^{-mr}$ and then computes $a$ from $g^{mr}a$.
\end{description}
It is known that the security of the ElGamal cryptosystem is equivalent to the Diffie-Hellman problem~\cite[Proposition 2.10]{silverbook}. A very similar idea is the MOR cryptosystem.
\subsection{Description of the MOR cryptosystem}\label{MOR}
Let $G=\langle g_1,g_2,\ldots,g_\tau\rangle$, $\tau\in\mathbb{N}$
be a finite group and $\phi$ a non-trivial automorphism of 
$G$. Alice's keys are as follows: 
\begin{description}\label{keyex}
\item[Private Key] $m$, $m\in\mathbb{N}$.
\item[Public Key] $\left\{\phi(g_i)\right\}_{i=1}^\tau$ and $\left\{\phi^m(g_i)\right\}_{i=1}^\tau$.
\end{description}
\paragraph{\textbf{Encryption}}
\begin{description}
\item[a] To send a message (plaintext) $a\in G$ Bob computes $\phi^r$
  and $\phi^{mr}$ for a random $r\in\mathbb{N}$.
\item[b] The ciphertext is $\left(\left\{\phi^r(g_i)\right\}_{i=1}^\tau,\phi^{mr}(a)\right)$.
\end{description}
\paragraph{\textbf{Decryption}}
\begin{description}
\item[a] Alice knows $m$, so if she receives the ciphertext
  $\left(\phi^r,\phi^{mr}(a)\right)$, she computes $\phi^{mr}$ from $\phi^r$ and
  then $\phi^{-mr}$ and then computes $a$ from $\phi^{mr}(a)$.
\end{description}
Alice knows the order of the automorphism $\phi$, she can use
the identity $\phi^{t-1}=\phi^{-1}$ whenever $\phi^t=1$ to compute
$\phi^{-mr}$.

It is easy to see the following: if one can solve the Diffie-Hellman problem in $\langle\phi\rangle$, he can break the MOR cryptosystem. This follows from the fact that $\phi^r$ and $\phi^m$ are public. If one can solve the Diffie-Hellman problem, one can compute $\phi^{mr}$ and get the plaintext. The next theorem proves the converse.
\begin{theorem}
If there is an oracle that can decrypt arbitrary ciphertext for the MOR cryptosystem, one can solve the Diffie-Hellman problem in $\langle\phi\rangle$.  
\end{theorem}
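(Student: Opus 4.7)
The plan is to reduce the Diffie-Hellman problem in $\langle\phi\rangle$ to oracle-aided decryption in the obvious way: I will use the decryption oracle as a black box that, on any queried ciphertext, hands back the plaintext, and I will construct queries whose ``plaintexts'' encode the sought Diffie-Hellman value.

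Concretely, suppose I am given $\phi$, $\phi^{m^\prime}$, $\phi^{m^{\prime\prime}}$ (each presented through their actions on the generators $g_1,\dots,g_\tau$) and I wish to recover $\phi^{m^\prime m^{\prime\prime}}$. First I would set up a MOR instance whose public key is $(\phi,\phi^{m^\prime})$, treating $m^\prime$ as the unknown private exponent; the decryption oracle $\mathcal{O}$ then, on input $(\psi,c)$, outputs $\psi^{-m^\prime}(c)$. Next, for each generator $g_i$ I would feed the oracle the pair $\bigl(\{\phi^{m^{\prime\prime}}(g_j)\}_{j=1}^\tau,\; g_i\bigr)$, which is a syntactically legal ciphertext (it is the encryption of the plaintext $a_i=\phi^{-m^\prime m^{\prime\prime}}(g_i)$ under the random choice $r=m^{\prime\prime}$). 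The oracle must therefore return exactly $\phi^{-m^\prime m^{\prime\prime}}(g_i)$. After $\tau$ such queries I have the full action of the automorphism $\phi^{-m^\prime m^{\prime\prime}}$ on a generating set, i.e.\ I know $\phi^{-m^\prime m^{\prime\prime}}$ as an element of $\langle\phi\rangle$.

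Finally I need to pass from $\phi^{-m^\prime m^{\prime\prime}}$ to its inverse $\phi^{m^\prime m^{\prime\prime}}$, which is what Diffie-Hellman asks for. Since $\phi$ has known finite order $t$, I can compute $\phi^{m^\prime m^{\prime\prime}} = \bigl(\phi^{-m^\prime m^{\prime\prime}}\bigr)^{t-1}$ by standard square-and-multiply on automorphisms, exactly as Alice does during legitimate decryption using the identity $\phi^{t-1}=\phi^{-1}$ noted right after the MOR description. This produces $\phi^{m^\prime m^{\prime\prime}}$ with a polynomial number of group operations and $\tau$ oracle calls, completing the reduction.

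The only conceptual point that requires a line of justification is that the pair $\bigl(\{\phi^{m^{\prime\prime}}(g_j)\}_j,g_i\bigr)$ is in the range of the encryption map, so that the oracle is obliged to answer correctly rather than rejecting or behaving arbitrarily; this is immediate from surjectivity of $\phi^{m^\prime m^{\prime\prime}}$ on $G$. I do not anticipate a serious obstacle — the main thing to emphasize in the write-up is that one query per generator suffices to extract the full automorphism $\phi^{-m^\prime m^{\prime\prime}}$, after which inversion inside the cyclic group $\langle\phi\rangle$ is free.
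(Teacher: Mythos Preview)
Your proposal is correct and follows essentially the same approach as the paper: set the public key to $(\phi,\phi^{m'})$, query the oracle on $(\phi^{m''},g_i)$ for each generator to obtain $\phi^{-m'm''}(g_i)$, and then invert inside $\langle\phi\rangle$. If anything, your write-up is slightly more careful than the paper's, since you make explicit why the queried pair is a legitimate ciphertext and how inversion is carried out via the known order $t$.
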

\begin{proof}
Assume that there is an oracle that can decrypt arbitrary MOR ciphertext. Now recall that 
$a=\phi^{-mr}\left(\phi^{mr}(a)\right)$. Now suppose we have $\phi^{m^\prime}$ and $\phi^{m^{\prime\prime}}$ 
and we want to compute $\phi^{m^\prime m^{\prime\prime}}$. Then tell the oracle that $\phi^{m^\prime}$ is 
the public key and 
$\left(\phi^{m^{\prime\prime}},g_i\right)$ is the ciphertext. The oracle will return $\phi^{-m^\prime m^{\prime\prime}}(g_i)$ as the plaintext. Once this game is played for $i=1,2,\ldots,\tau$. We know $\phi^{-m^\prime m^{\prime\prime}}(g_i)$ for $i=1,2,\ldots,\tau$ and hence $\phi^{m^\prime m^{\prime\prime}}$. Thus solving the Diffie-Hellman problem in $\langle\phi\rangle$.
\end{proof}
In this paper we are primarily interested in exploring finite
$p$-groups for the purpose of building a \emph{secure} MOR cryptosystem. As
is well known, security and computational efficiency goes hand in
hand.  So unless we explore the computational complexity of the MOR
cryptosystem, a security analysis is useless. So there are two
questions that we will explore in this paper:
\begin{description}
\item[a] Is it possible to build a secure MOR cryptosystem using finite
  $p$-groups?
\item[b] Does this MOR cryptosystem has any advantage over existing
  cryptosystems?
\end{description}
Before we answer these questions, we need to explain one aspect of the security of the discrete logarithm problem. It is easy to see, using the Chinese remainder theorem, that the discrete logarithm problem in any cyclic group can be reduced to a discrete logarithm problem in its Sylow subgroups. Then a discrete logarithm problem in the Sylow subgroup can be further reduced to the discrete logarithm problem in a group of prime power ~\cite[Section 2.9]{silverbook}. The end result is: the
security of the discrete logarithm problem in a group is the security of the discrete logarithm problem in the largest prime-order subgroup in that group. In practice, the group considered for an
efficient and secure implementation of the discrete logarithm problem
is a group of prime order\footnote{The reader must have noticed that in the definition of the discrete logarithm problem we used groups of prime order.}. 
From the above argument, it is clear that we should only study \textbf{automorphisms of prime order} for the MOR cryptosystem.
 
One way to study automorphisms of a finite $p$-group for the MOR cryptosystem is using linear
methods in nilpotent groups ~\cite[Chapter VIII]{huppert}. That is our principal objective in this paper. The idea is
to find a series of subgroups such that automorphisms act linearly
either on the subgroups or the quotients. We will soon
assume, if a subgroup is fixed under an automorphism then it is the identity on that subgroup. With this assumption, we only
have to look at the action of an automorphism on the sections of the series.

With these in mind, we look at the exponent-$p$ central series of a
finite $p$-group $G$. The series is defined as follows:
\[G=G_0\unrhd G_1\unrhd\ldots\unrhd G_k=1\]
where $G_{i+1}=\left[G,G_i\right]G_i^p$. This series is well known to
have elementary-abelian quotients and is used in many aspects of
computations with finite $p$-groups~\cite{newman}.

There are two possible orders of an automorphism of a $p$-group for the MOR cryptosystems:
\begin{description}
\item[i] The automorphism $\phi$ is of order $p$.
\item[ii] The order of $\phi$ is a prime different from $p$, i.e., a $p^\prime$-automorphism.
\end{description} 
This can again be subdivided into four different cases:
\begin{description}
\item[a] The automorphism is of order $p$ and is identity on all the
  quotients of the exponent-$p$ central series.
\item[b] The automorphism is of order $p$ and is not identity on at least
  one section of the exponent-$p$ central series.
\item[c] The automorphism is of order $p^\prime$ and is not identity on
  at least one section of the exponent-$p$ central series.
\item[d] The automorphism is of order $p^\prime$ and is identity on all
  sections of the exponent-$p$ central series.
\end{description}
Recall that $G_1$ is the Frattini subgroup $\Phi(G)$. A well known theorem of
Burnside says that:
\begin{theorem}[Burnside]
Let $\phi$ be an automorphism of a group $G$. If
$\gcd\left(\mathrm{o}(\phi),|\Phi(G)|\right)=1$ and $\phi$ induces the identity
automorphism on $G/\Phi(G)$, $\phi$ is the identity
automorphism on $G$.
\end{theorem}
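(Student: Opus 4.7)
The plan is to combine coprime action of $\langle\phi\rangle$ on $G$ with the non-generator characterization of the Frattini subgroup. Write $N=\Phi(G)$ and $A=\langle\phi\rangle$; the hypotheses give $\gcd(|A|,|N|)=1$ and $A$ acting trivially on $G/N$, and since $\Phi(G)$ is characteristic, $A$ restricts to an action on $N$.

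First I would apply the standard coprime action lemma from Gorenstein~\cite{gorenstein}: if a finite group $A$ acts on a finite group $G$ with an $A$-invariant normal subgroup $N$ satisfying $\gcd(|A|,|N|)=1$, then every $A$-fixed coset of $N$ lifts to an honest $A$-fixed element of $G$; equivalently, $C_{G/N}(A)=C_G(A)N/N$. By hypothesis $\phi$ centralizes $G/N$, so every coset of $N$ in $G$ is $\phi$-invariant, and the lemma yields $C_G(\phi)\cdot N=G$.

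Next I would invoke the defining property of $\Phi(G)$: its elements are precisely the non-generators of $G$, so any subgroup $H\le G$ with $H\Phi(G)=G$ must satisfy $H=G$. Taking $H=C_G(\phi)$ forces $C_G(\phi)=G$, which is exactly the statement that $\phi$ is the identity automorphism of $G$.

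The main obstacle is the non-trivial containment $C_{G/N}(A)\subseteq C_G(A)N/N$ in the coprime action lemma. When $N$ is abelian the obstruction to lifting a fixed coset $gN$ to a true fixed point is the $1$-cocycle $\phi\mapsto\phi(g)g^{-1}$ from $A$ into $N$, and the coprimality of $|A|$ and $|N|$ kills $H^1(A,N)$ by an averaging argument. For general $N$ one reduces to the abelian case by induction on $|N|$: since $\Phi(G)$ is always nilpotent (Frattini), $Z(N)$ is a non-trivial $A$-invariant abelian normal subgroup, the coprimality hypothesis is inherited by both $Z(N)$ and $N/Z(N)$, and patching the lifted fixed points on these two pieces finishes the induction.
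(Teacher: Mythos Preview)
Your argument is correct. The paper does not actually supply its own proof of this theorem; it simply refers the reader to \cite[Theorem 1.15]{russ} and \cite[Theorem 5.1.4]{gorenstein}. Your route---the coprime fixed-point lemma $C_{G/N}(A)=C_G(A)N/N$ applied with $N=\Phi(G)$, followed by the non-generator characterization of the Frattini subgroup to upgrade $C_G(\phi)\Phi(G)=G$ to $C_G(\phi)=G$---is exactly one of the standard textbook proofs, and your handling of the solvability hypothesis needed for the lifting lemma (via the nilpotency of $\Phi(G)$) is the right justification. There is nothing to correct or compare here beyond noting that you have filled in what the paper chose to outsource.
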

\begin{proof}
For a proof see~\cite[Theorem 1.15]{russ} or~\cite[Theorem 5.1.4]{gorenstein}.
\end{proof}
This says, the case c above reduces to:
the automorphism $\phi$ is of order $p^\prime$ and is not identity on
$G/\Phi(G)$. In this case $\phi$ acts on $G/\Phi(G)$ linearly and the
discrete logarithm problem in $\phi$ deduces to the discrete logarithm
problem in matrices over $\mathbb{F}_p$. The size of the matrix is
the same as the cardinality of a set of minimal generators of the
$p$-group.

It is also well known, if d is the case then $\phi$ is the
identity~\cite[Theorem 5.3.2]{gorenstein}. So there is no point studying d.

So we have three cases to look at a, b and c above.  

It is well known that usually, the exception being groups of prime order,
$p$-groups come with lots of subgroups and normal subgroups. 
The most difficult issue that one faces in choosing a $p$-group and the automorphism
$\phi$ for the MOR cryptosystem is the presence of subgroups of the
$p$-group which is fixed by $\phi$. If this happens, 
the discrete logarithm problem in the automorphism $\phi$ is reduced
to the discrete logarithm problem in the restriction of $\phi$ to that
subgroup. This reduction is most undesirable. On the other hand, working with
non-abelian $p$-groups this reduction is bound to happen. For example, the
commutator and the center are non-trivial characteristic
subgroups. The way out of this situation is to ensure, if $\phi$
fixes any subgroup then it is the identity on that subgroup. Once this
condition is imposed, we will see that favorable groups with $p^\prime$-automorphism are reduced
to either the elementary abelian $p$-group or the class of $p$-groups
$G$ with $G^\prime=\mathcal{Z}(G)=\Phi(G)$ and $\Phi(G)$ is elementary
abelian. Here $G^\prime$ is the
commutator subgroup, $\mathcal{Z}(G)$ is the center and
$\Phi(G)$ is the Frattini subgroup of $G$. These two class of
groups together are known as the \textbf{special $p$-groups}. 
\section{MOR cryptosystems on finite $p$-groups using $p^\prime$-automorphisms}
In this section we look at the MOR cryptosystem over finite $p$-groups
with $p^\prime$-automorphisms. Our standard reference for group theory
is Gorenstein~\cite{gorenstein} and for linear algebra is
Roman~\cite{roman}. We start with a generalization of a celebrated theorem from the odd-order paper. 
\begin{theorem}
A solvable group $G$ possesses a characteristic subgroup $C$
with the following properties:
\begin{description}
\item Subgroup $C$ is nilpotent with nilpotency class less than or equal 2.
\item $\mathcal{Z}(C)$ is a maximal characteristic abelian subgroup of $G$.
\item $\mathcal{C}_G(C)=\mathcal{Z}(C)$.
\item Every nontrivial $p^\prime$-automorphism of $G$ induces a
  non-trivial automorphism on $C$.
\end{description} 
\end{theorem}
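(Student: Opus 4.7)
The plan is to generalise Thompson's classical critical-subgroup construction to the solvable setting. First, choose a characteristic abelian subgroup $A$ of $G$ of maximal order (such an $A$ exists since $\{1\}$ is one and $G$ is finite), and set $K = \mathcal{C}_G(A)$. Both $A$ and $K$ are characteristic in $G$. We have $A \le \mathcal{Z}(K)$ because $K$ centralises $A$; and by the maximal choice of $A$, the centre $\mathcal{Z}(K)$, which is characteristic in $K$ and hence in $G$ and is abelian, cannot strictly contain $A$. Thus $A = \mathcal{Z}(K)$.

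Next, define $C$ as the preimage in $K$ of $\mathcal{Z}(K/A)$, i.e.\ $C/A = \mathcal{Z}(K/A)$. Then $C$ is characteristic in $G$, being the preimage of a characteristic subgroup of a characteristic quotient. By construction $[C,K] \le A$, so $[C,C] \le A$, and since $A = \mathcal{Z}(K)$ lies in $\mathcal{Z}(C)$, one has $[C,[C,C]] \le [C,A] = 1$; thus $C$ has nilpotency class at most $2$. Furthermore $\mathcal{Z}(C) \supseteq A$, and another application of the maximality of $A$ (applied to the characteristic abelian subgroup $\mathcal{Z}(C)$ of $G$) forces equality $\mathcal{Z}(C) = A$, so $\mathcal{Z}(C)$ is indeed maximal among characteristic abelian subgroups of $G$. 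This settles the first two bullet points.

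The main obstacle is the self-centralizing property $\mathcal{C}_G(C) = \mathcal{Z}(C)$. One inclusion is trivial. For the other, any $x \in \mathcal{C}_G(C)$ centralises $A \le C$ and so lies in $K$, reducing the problem to showing $\mathcal{C}_K(C) \le C$. The naive construction above may not quite suffice; one must either iterate the ``preimage of the centre'' operation up the upper central series of $K$ until it stabilises, or pass to the Fitting subgroup $F(K)$ and exploit the identity $\mathcal{C}_K(F(K)) \le F(K)$ available for solvable $K$. In either case the final step is a maximality contradiction: an element $y \in \mathcal{C}_K(C) \setminus C$ would allow one to manufacture a characteristic abelian subgroup of $G$ strictly larger than $A$, contradicting the choice of $A$.

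Finally, for the fourth bullet, suppose $\phi$ is a nontrivial $p^\prime$-automorphism of $G$ inducing the identity on $C$. Then $\phi$ fixes $A = \mathcal{Z}(C)$ elementwise. Applying the identity $\phi(g c g^{-1}) = \phi(g)\phi(c)\phi(g)^{-1}$ with $c \in C$, together with $\phi|_C = \mathrm{id}$ and $\mathcal{C}_G(C) = A$, shows that $\phi(g)^{-1} g \in A$ for every $g \in G$; hence $\phi(g) = g\,\delta(g)$ with $\delta(g) \in A$. Iterating (using $\phi|_A = \mathrm{id}$ and the fact that $A$ is abelian) gives $\phi^k(g) = g\,\delta(g)^k$, so $\delta(g)^{|\phi|} = 1$. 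In the $p$-group setting of this paper, $A$ is a $p$-group and $|\phi|$ is coprime to $p$, forcing $\delta \equiv 1$ and $\phi = \mathrm{id}$, the desired contradiction. The genuinely delicate step throughout is the third bullet; the rest is routine once self-centralization is in hand.
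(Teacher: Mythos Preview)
The paper does not prove this theorem at all; it simply cites an external reference (``For a proof see~[russ, Theorem 14.1]''). So there is no in-paper argument to compare against, and your sketch has to stand on its own.

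Your construction is the classical Thompson critical-subgroup construction, and the first two bullets do follow as you describe. But two genuine gaps remain.

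\textbf{Third bullet.} You correctly identify self-centralization as the delicate point, but the argument you gesture at is not one. Saying that an element $y\in\mathcal{C}_K(C)\setminus C$ would ``manufacture a characteristic abelian subgroup strictly larger than $A$'' is unsupported: $y$ is a single element, not a characteristic subgroup, and there is no evident characteristic abelian subgroup containing both $A$ and $y$. The standard route is different: one observes that $\mathcal{C}_K(C)\cap C=\mathcal{Z}(C)=A$, so $\mathcal{C}_K(C)/A$ is a normal subgroup of $K/A$ meeting $C/A=\mathcal{Z}(K/A)$ trivially. When $K/A$ is nilpotent (as in the $p$-group case), every nontrivial normal subgroup meets the centre, forcing $\mathcal{C}_K(C)=A$. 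For a general solvable $G$ one must first arrange for $K$ (or the relevant piece of it) to be nilpotent, typically by working inside the Fitting subgroup from the outset; your sketch mentions $F(K)$ but never actually uses it.

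\textbf{Fourth bullet.} Your iteration gives $\delta(g)^{|\phi|}=1$ with $\delta(g)\in A$, and you then invoke that $A$ is a $p$-group while $|\phi|$ is a $p'$-number. That is fine when $G$ is a $p$-group, which is the only case the paper subsequently uses, but it does \emph{not} prove the theorem as stated for solvable $G$: there $A$ need not be a $p$-group. Indeed, for $G=S_3$ and $p=2$ the construction yields $C=A_3$, yet conjugation by a $3$-cycle is a nontrivial $2'$-automorphism acting trivially on $C$. So either the fourth bullet needs a different hypothesis in the solvable setting (e.g.\ automorphisms of order coprime to $|C|$ or to $|G|$), or the ``solvable'' in the statement is a slip and the cited reference is really about $p$-groups. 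Your proof covers only the latter interpretation.
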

For a proof see ~\cite[Theorem 14.1]{russ}. The subgroup $C$
is called a \emph{Thompson critical subgroup}. We will refer to
it as a critical subgroup. There can be more than one critical
subgroup in a group. It is clear from the theorem above, in our
search for favorable $p$-groups, we should look at $p$-groups whose only critical subgroup is the 
whole group. We will call those groups \textbf{self-critical}. Since a
self-critical group is of class at most 2, we should look at $p$-groups of class at most $2$. Now 
if $p$ is odd, in a $p$-group of class 2, $(xy)^p=x^py^p$. This makes the subgroup $\Omega_1(G)$ of exponent 
$p$. Since $\Omega_1(G)$ is characteristic the following corollary follows immediately. 
\begin{corollary}
For an odd prime $p$, favorable $p$-groups are of class at most 2 with
exponent $p$.
\end{corollary}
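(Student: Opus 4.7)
The plan is to establish the two assertions---class at most $2$ and exponent $p$---in sequence, using Thompson's critical subgroup theorem for the first, and regularity of class-$2$ $p$-groups together with favorability applied to $\Omega_1(G)$ for the second.

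First I will apply the critical subgroup theorem stated above. Let $\phi$ be a non-identity $p^\prime$-automorphism of $G$ witnessing its favorability, and let $C$ be a critical subgroup of $G$. Since $C$ is characteristic it is $\phi$-invariant, and by the fourth bullet of the theorem the restriction $\phi|_C$ is non-trivial. If $C$ were a proper subgroup of $G$, favorability would force $\phi|_C = \mathrm{id}$, contradicting the previous sentence. Hence $C=G$, so $G$ is self-critical, and the first bullet of the theorem yields nilpotency class at most $2$.

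For exponent $p$, I will use that for $p$ odd and class at most $2$ the Hall--Petresco formula $(xy)^p = x^p y^p [y,x]^{\binom{p}{2}}$ has a correction term that behaves well since $p \mid \binom{p}{2}$; equivalently, $G$ has class strictly less than $p$ and is therefore a regular $p$-group. In a regular $p$-group, $\Omega_1(G)=\{g\in G:g^p=1\}$ is a characteristic subgroup of exponent $p$. Favorability applied to $\Omega_1(G)$ now produces a dichotomy: either $\Omega_1(G)=G$ (and $G$ has exponent $p$, as required), or $\Omega_1(G)$ is a proper subgroup and $\phi|_{\Omega_1(G)}=\mathrm{id}$.

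The heart of the proof is to rule out the second alternative. For this I will invoke the standard coprime-action principle that a $p^\prime$-automorphism of a $p$-group which acts trivially on $\Omega_1$ must itself be trivial. In the abelian case this follows directly from the cyclic decomposition. In the class-$2$ case it can be reduced to the abelian case either by passing through $G/G^\prime$ or directly via the coprime decomposition $G=[G,\phi]\,\mathcal{C}_G(\phi)$, using that favorability simultaneously forces $\phi$ to act trivially on every other proper characteristic subgroup (in particular on $G^\prime$, $\mathcal{Z}(G)$, $\Phi(G)$ and $\mho_1(G)$). The resulting triviality of $\phi$ would contradict the standing hypothesis that $\phi$ is non-identity, so the second alternative is impossible and $\Omega_1(G)=G$. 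This final coprime-action lemma is the delicate step that I would invest the most care in when expanding this sketch into a full proof, since all earlier steps amount to bookkeeping with characteristic subgroups and the hypothesis of favorability.
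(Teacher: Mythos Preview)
Your argument is correct and follows the same route as the paper: self-criticality via the Thompson critical subgroup theorem gives class at most $2$, and then for odd $p$ the subgroup $\Omega_1(G)$ has exponent $p$ and is characteristic, forcing $\Omega_1(G)=G$. The paper dismisses this last implication as ``immediate,'' whereas you correctly isolate it as the one substantive step and sketch a valid justification (the cleanest version being: if $\Omega_1(G)\neq G$ then $\mho_1(G)$ is also proper characteristic, so $\phi$ is trivial on both $\Omega_1(G)$ and $G/\Omega_1(G)\cong\mho_1(G)$, hence $\phi$ has $p$-power order, contradiction).
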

Before we go any further we need to state a well known theorem due to Hall and 
Higman ~\cite[Theorem C]{hall-higman}. The proof is available in many standard 
textbooks ~\cite[Theorem 5.3.7]{gorenstein}, so we won't reproduce it. 
\begin{theorem}\label{thm1}
Let $G$ be a favorable $p$-group, then $G/G^\prime$ is elementary abelian.
\end{theorem}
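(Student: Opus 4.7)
The plan is to reduce to the induced action on the abelian group $B := G/G'$, exploit a Maschke-style averaging there, and use the favorable condition twice --- once on the Frattini subgroup to control the $p$-torsion of $B$, and once on suitable preimages to eliminate the $\bar\phi$-fixed summand.

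Let $\bar\phi$ be the automorphism of $B$ induced by $\phi$ and let $n := o(\bar\phi)$, so $\gcd(n,p)=1$. Since $n$ is invertible in $\operatorname{End}(B)$ (multiplication by $n$ is a bijection on any finite abelian $p$-group), the averaging operator $\pi := \tfrac{1}{n}\sum_{i=0}^{n-1}\bar\phi^i$ is an idempotent endomorphism commuting with $\bar\phi$, giving a $\bar\phi$-invariant direct sum $B = B_1 \oplus B_2$ with $B_1 := \operatorname{Fix}(\bar\phi) = \operatorname{Im}\pi$ and $B_2 := \ker\pi$. The Frattini subgroup $\Phi(G) = G^p G'$ is a proper characteristic subgroup of $G$, so by favorability $\phi|_{\Phi(G)} = \mathrm{id}$; projecting to $B$, this says $\bar\phi$ is trivial on $\Phi(G)/G' = pB$. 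Hence $pB \subseteq B_1$, and in particular $pB_2 \subseteq B_1 \cap B_2 = 0$, so $B_2$ is already elementary abelian.

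The heart of the argument, and what I expect to be the main obstacle, is showing $B_1 = 0$. Let $K$ and $L$ be the preimages in $G$ of $B_1$ and $B_2$; both contain $G'$ and are $\phi$-invariant (because $B_1$, $B_2$ are $\bar\phi$-invariant). Suppose for contradiction $B_1 \neq 0$. Neither $K$ nor $L$ can equal $G$: $K = G$ would mean $B_1 = B$, i.e., $\bar\phi$ is trivial on $G/G'$ and a fortiori on $G/\Phi(G)$, and then Burnside's theorem (stated above) would give $\phi = \mathrm{id}$, contradicting non-triviality of $\phi$; similarly $L = G$ would mean $B_2 = B$ and hence $B_1 = 0$, against the contradiction hypothesis. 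So $K$ and $L$ are both proper $\phi$-invariant subgroups, and favorability forces $\phi|_K = \phi|_L = \mathrm{id}$. Then $\phi$ is the identity on $\langle K, L\rangle$, and since the projection sends $\langle K,L\rangle$ onto $B_1 + B_2 = B$ we must have $\langle K,L\rangle = G$, forcing $\phi = \mathrm{id}$ on $G$, a contradiction. Therefore $B_1 = 0$, $B = B_2$, and $G/G'$ is elementary abelian. The subtleties to verify are that the averaging genuinely produces $\bar\phi$-invariant summands of the abelian $p$-group $B$ (which hinges on $n$ being a unit in $\operatorname{End}(B)$) and that both preimages $K$, $L$ are proper subgroups of $G$, which is precisely where Burnside's theorem gets used to rule out $B_1 = B$.
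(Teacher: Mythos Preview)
Your proof is correct. The paper itself does not supply a proof of this statement: it attributes the result to Hall and Higman (their Theorem~C) and refers the reader to Gorenstein, Theorem~5.3.7, so there is no in-paper argument to compare against line by line.

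Your route --- averaging over $\langle\bar\phi\rangle$ on the abelian $p$-group $B=G/G'$ to split off the fixed-point summand $B_1$, then pulling the two summands back to $G$ and invoking favorability plus Burnside's theorem to force $B_1=0$ --- is a clean, self-contained instance of the standard coprime-action machinery, and every ingredient you use (Burnside's theorem, $\Phi(G)=G^pG'$, favorability) is already on the table in the paper. The textbook proofs the paper defers to are organized a bit differently (typically via the decomposition $G=C_G(\phi)[G,\phi]$ for coprime action, or via module-theoretic language for the $\langle\phi\rangle$-action on the abelianization), but the underlying mechanism is the same: coprimeness makes the fixed-point set a direct summand, and the favorability hypothesis then kills it. Your write-up has the virtue of making explicit exactly where Burnside's theorem enters, namely to rule out $B_1=B$; the only cosmetic point worth noting is that the edge case $\bar\phi=\mathrm{id}$ (so $n=1$, $\pi=\mathrm{id}$, $B_1=B$) is silently absorbed into that same Burnside step, which is fine but could be flagged for the reader.
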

To summarize, favorable $p$-groups are of class at most $2$ and $G/G^\prime$ is 
elementary-abelian. It follows that $G^\prime\leq\mathcal{Z}(G)$. Then both $G/G^\prime$ and 
$G/\mathcal{Z}(G)$ are elementary abelian $p$-groups. We also have a $p^\prime$-automorphism 
$\phi$, such that, if $\phi$ fixes a subgroup of $G$, it is the identity on that subgroup. In particular, $\phi$ is the identity on $G^\prime$ and $\mathcal{Z}(G)$. 
 
\noindent There are two different ways to look at this situation:

\paragraph{\textbf{Ordinary representation theory}} Let
$A=\langle\phi\rangle$ be the subgroup generated by $\phi$. Since
$\phi$ is a $p^\prime$-automorphism, the order of $A$ is coprime to
the order of the group $G$. We have a coprime action of $A$ on
$G$. In particular, we have a linear action of $A$ on
$V=G/G^\prime$. Since this action is coprime we have the celebrated
Maschke's  theorem~\cite[Theorem 3.3.1]{gorenstein} at our
disposal. The theorem states, if we have an $A$-invariant proper
subspace $W\subset V$, it has an $A$-invariant complement. In other words there
is an $A$-invariant subspace $W^\prime$ of $V$ such that $V=W\oplus W^\prime$.
\paragraph{\textbf{Linear algebra}} Another way to look at the same
situation is by linear algebra. Let $V=G/G^\prime$. Clearly $V$ is a
finite dimensional vector-space over $\mathbb{F}_p$. Corresponding to a linear transformation 
$\phi$ of $V$, we can define scalar multiplication such that $V$ is a finitely generated module 
over the principal ideal domain
$\mathbb{F}_p[x]$ ~\cite[Chapter 7]{roman}. 
We denote this module by $V_{\phi}$. The reason we are interested in this module $V_\phi$ is that 
the submodules of $V_\phi$ are the $\phi$-invariant subspaces of $V$. 
With this we have the full force of the theory of finitely generated modules over a principal 
ideal domain at our disposition; especially the decomposition theorem.

The minimal polynomial of $\phi$ is a generator of the annihilator ideal of $V_\phi$ in $\mathbb{F}_p[x]$. We denote it 
by $\mathfrak{m}_\phi$ and assume it to be monic. Let $\mathfrak{m}_\phi=f_1^{m_1}(x)f_2^{m_2}(x)\ldots f_{k}^{m_k}(x)$ be the decomposition of $\mathfrak{m}_\phi$ as product of irreducible monic polynomials. One can write $V_\phi=V_1\oplus V_2\oplus\ldots\oplus V_k$ where a generator of the annihilator ideal of each \emph{primary component} $V_i$ is $f_{i}^{m_i}$. Each $V_i$ can either be cyclic or can be broken down as direct sum of cyclic modules. This theory is very well-known and successful, so we will omit the details and ask any interested reader to consult a textbook in linear algebra -- Roman ~\cite{roman} being one of them.  
\begin{lemma}
Let $\phi$ be a non-identity $p^\prime$-automorphism on $V$, where $V$
is a finite-dimensional vector space over $\mathbb{F}_p$; such that, if $\phi$ fixes a subspace of $V$ then it is the identity on that subspace. The following is true:
\begin{itemize}
\item[a.]The characteristic polynomial $\chi{_\phi}$ of $\phi$ is irreducible.
\item[b.]The module $V_{\phi}$ is simple.
\end{itemize} 
\end{lemma}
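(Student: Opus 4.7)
The plan is to prove (b) first, because (a) will then drop out of the structure theorem for finitely generated modules over the PID $\mathbb{F}_p[x]$.

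For (b), I would argue by contradiction. Suppose $W\subset V$ were a proper nonzero $\phi$-invariant subspace. By hypothesis (applied to the proper subspace $W$), $\phi|_W=\mathrm{id}_W$. Since $\phi$ is a $p^\prime$-automorphism, the cyclic group $\langle\phi\rangle$ has order coprime to $p=\mathrm{char}\,\mathbb{F}_p$, so Maschke's theorem (invoked in the ordinary-representation paragraph just before the lemma) provides a $\phi$-invariant complement $W^\prime$ with $V=W\oplus W^\prime$. Because $W\neq 0$, the subspace $W^\prime$ is again proper and $\phi$-invariant, so the hypothesis applies once more and gives $\phi|_{W^\prime}=\mathrm{id}_{W^\prime}$. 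Combining the two restrictions, $\phi=\mathrm{id}_V$, contradicting the assumption that $\phi$ is non-identity. Hence $V_\phi$ has no proper nonzero submodule, i.e.\ it is simple.

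For (a), the simplicity of $V_\phi$ means its annihilator is a maximal ideal of $\mathbb{F}_p[x]$, so the minimal polynomial $\mathfrak{m}_\phi$ is irreducible and $V_\phi\cong \mathbb{F}_p[x]/(\mathfrak{m}_\phi)$ as $\mathbb{F}_p[x]$-modules. Taking $\mathbb{F}_p$-dimensions yields $\dim_{\mathbb{F}_p}V=\deg\mathfrak{m}_\phi$. On the other hand $\deg\chi_\phi=\dim_{\mathbb{F}_p}V$ and $\mathfrak{m}_\phi\mid\chi_\phi$ with both polynomials monic; equality of degrees then forces $\chi_\phi=\mathfrak{m}_\phi$, which is irreducible.

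The only delicate point — and it is not really an obstacle — is reading the hypothesis as quantifying over \emph{proper} invariant subspaces, so that applying it to $V$ itself is not allowed (otherwise the conclusion would be absurd) and the Maschke complement $W^\prime$ is guaranteed to be proper precisely because $W\neq 0$. Everything else is routine: the coprimality condition for Maschke's theorem is built into the assumption that $\phi$ is a $p^\prime$-automorphism, and the translation between simple $\mathbb{F}_p[x]$-modules, irreducible minimal polynomials, and irreducible characteristic polynomials in equal degrees is standard.
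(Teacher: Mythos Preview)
Your proof is correct. Both you and the paper rely on the same core mechanism---Maschke's theorem together with the hypothesis forces $\phi$ to be the identity on a proper invariant subspace and on its invariant complement---but you organize the argument more efficiently. The paper first invokes the primary decomposition of $V_\phi$ (which does not need Maschke) to reduce the minimal polynomial to a single prime power $f^l$, then applies Maschke to the subspace $\ker f^{l-1}(\phi)$ to force $l=1$, and only afterwards deduces that $V_\phi$ is cyclic non-derogatory, hence $\chi_\phi=\mathfrak{m}_\phi$ and $V_\phi$ is simple. You skip the primary-decomposition detour entirely: one application of Maschke to an arbitrary proper nonzero invariant subspace gives simplicity directly, and irreducibility of $\chi_\phi$ then drops out of the standard description of simple $\mathbb{F}_p[x]$-modules via a dimension count. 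Your route is shorter and also isolates more clearly where the $p^\prime$-hypothesis is actually used (namely, only to invoke Maschke).
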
 
\begin{proof}
Recall that $V_{\phi}$ is a finitely generated module over a principal ideal domain
$\mathbb{F}_p[x]$. Let $\mathfrak{m}_\phi$ be the minimal polynomial of $V_\phi$. Assume that
$\mathfrak{m}_{\phi}=f_1^{m_1}(x)f_2^{m_2}(x)\ldots f_{k}^{m_k}(x)$,
where each $f_i(x)$ is monic irreducible over $\mathbb{F}_p$ and each $m_i$
is a non-negative integer. Define the set \[V_i=\left\{v\in V_{\phi}\;
  :\; f_i^{m_i}(\phi)v=0\right\}\]
Then the fundamental theorem of finitely generated module over a
principal ideal theorem says that $V_{\phi}=V_1\oplus V_2\oplus\ldots\oplus V_k$. Now
assume for a moment that $k$ is greater than 1. Then we have $V_{\phi}$
as direct sum of non-trivial submodules. Recall that submodules of
$V_{\phi}$ are the $\phi$-invariant subspaces of $V$. Then we have that
$V$ is a direct sum of two $\phi$-invariant subspaces of $V$. So
$\phi$ acts like identity on both these subspaces and hence is the
identity on $V$. So this subspace decomposition is impossible, forcing
$k$ to be 1.

We have deduced that $\mathfrak{m}_{\phi}=f(x)^l$ where $f(x)$ is monic irreducible and $l$ is a 
positive integer. If $l$ is greater than 1, take the subspace
$V^\prime=\left\{v\in V_{\phi}\;:\; f^{l-1}(\phi)v=0\right\}$. Also construct the
subgroup $A=\langle\phi\rangle$. Since $\gcd(|A|,p)=1$, from Maschke's
theorem the subspace $V^\prime$ has a complement. This means that there is
another $A$-invariant subspace $V^{\prime\prime}$ such that
$V=V^\prime\oplus V^{\prime\prime}$. Then using an argument similar to
the one in last paragraph, we show that $l=1$ and the minimal polynomial $\mathfrak{m}_\phi$
is irreducible.

From the above discussion it follows clearly that the module $V_{\phi}$ is cyclic with irreducible 
minimal polynomial. Since a cyclic module with irreducible minimal polynomial is non-derogatory~\cite[Theorem 7.11]{roman}, we have the characteristic 
polynomial the same as the minimal polynomial.

The fact the module is simple, follows the fact that the minimal polynomial of any submodule 
will divide the minimal polynomial of the module and the minimal polynomial of the module is irreducible.   
\end{proof}
It is easy to prove a partial converse of the above lemma.
\begin{lemma}\label{u1}
Let $\phi$ be a linear transformation on the finite dimensional vector space over $\mathbb{F}_q$. 
If the characteristic polynomial $\chi_\phi$ is irreducible, the only $\phi$-invariant subspaces of
 $V$ are $0$ and $V$.
\end{lemma}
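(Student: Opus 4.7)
The plan is to prove the statement by a dimension argument based on the well-known fact that the characteristic polynomial of an invariant restriction divides the characteristic polynomial of the whole operator. Concretely, for any $\phi$-invariant subspace $W\subseteq V$, one can extend a basis of $W$ to a basis of $V$; with respect to this basis the matrix of $\phi$ is block upper-triangular, with the top-left block representing the restriction $\phi|_W$ and the bottom-right block representing the induced map $\overline{\phi}$ on $V/W$. Taking determinants of $xI-[\phi]$ block-wise then yields
\[
\chi_\phi(x)=\chi_{\phi|_W}(x)\cdot\chi_{\overline{\phi}}(x),
\]
so $\chi_{\phi|_W}$ is a monic divisor of $\chi_\phi$ in $\mathbb{F}_q[x]$.

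With that factorization in hand, the rest is immediate. Suppose $W$ is a $\phi$-invariant subspace with $W\neq 0$. Then $\chi_{\phi|_W}$ is monic of degree $\dim W\geq 1$, so it is a nonconstant monic divisor of $\chi_\phi$. Irreducibility of $\chi_\phi$ forces its only monic divisors to be $1$ and $\chi_\phi$ itself; the degree rules out the first, so $\chi_{\phi|_W}=\chi_\phi$. Comparing degrees gives $\dim W=\dim V$, hence $W=V$, which is exactly the claim.

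I do not expect any real obstacle; the statement is essentially a one-line consequence of the divisibility of characteristic polynomials along an invariant flag. If one preferred a module-theoretic presentation consistent with the preceding lemma, the same conclusion can be obtained by noting that irreducibility of $\chi_\phi$ forces $\mathfrak{m}_\phi=\chi_\phi$ (since $\mathfrak{m}_\phi$ is a nonconstant monic divisor of the irreducible $\chi_\phi$), so $V_\phi$ is cyclic and $\mathbb{F}_q[x]/(\mathfrak{m}_\phi)$ is a field; then $V_\phi$ is a one-dimensional vector space over this field, and its only $\mathbb{F}_q[x]$-submodules, i.e.\ the $\phi$-invariant subspaces of $V$, are $0$ and $V$. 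Either route works equally well, so I would present the short dimension argument.
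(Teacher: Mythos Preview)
Your proof is correct. Your primary argument uses the factorization $\chi_\phi=\chi_{\phi|_W}\cdot\chi_{\overline{\phi}}$ obtained from a block upper-triangular matrix representation, and then concludes by comparing degrees. The paper instead works on the module side: it observes that $\chi_\phi$ irreducible forces $\mathfrak{m}_\phi=\chi_\phi$, and then uses that the minimal polynomial of any submodule $S$ of $V_\phi$ divides $\mathfrak{m}_\phi$, so irreducibility leaves only the trivial possibilities. These are really two phrasings of the same divisibility principle---yours via characteristic polynomials and block-triangular form, the paper's via minimal polynomials and the $\mathbb{F}_q[x]$-module structure---and your alternative module-theoretic sketch is essentially the paper's argument with a bit more detail filled in. Either presentation is fine; your dimension argument has the virtue of being entirely self-contained, while the paper's version dovetails with the $V_\phi$ language set up just before the lemma.
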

\begin{proof}
We will consider $V_\phi$ as a module over $\mathbb{F}_q[x]$. Since $\chi_\phi$ is irreducible it is
 also the minimal polynomial. Now if $S$ is a submodule of $V_\phi$, then its minimal polynomial 
will divide $\chi_\phi$. Since $\chi_\phi$ is irreducible we have a proof.
\end{proof}
This lemma is the most useful lemma in this whole paper. This paper is in search of favorable 
$p$-groups and the corresponding automorphism. One way, and probably the easiest way, is to look 
at the characteristic polynomial corresponding to an automorphism. If that characteristic 
polynomial is irreducible, we have our favorable $p$-group and the necessary automorphism. 
\begin{theorem}
A favorable $p$-group $G$ is a special $p$-group.
\end{theorem}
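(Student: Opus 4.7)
The plan is to reduce via the Corollary above to the case where $G$ has class at most $2$ and exponent $p$, dispose of the abelian case quickly, and then extract the equality $\mathcal{Z}(G) = G'$ by applying the preceding Lemma to the induced action of $\phi$ on $V = G/G'$.

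First, if $G$ is abelian and favorable, the Corollary forces $G$ to have exponent $p$, so $G$ is elementary abelian, which the paper counts as a degenerate special group. Assume then that $G$ is non-abelian; the Corollary gives class exactly $2$ and $G^p = 1$, so $\mathcal{Z}(G)$ is a proper characteristic subgroup and $G' \leq \mathcal{Z}(G)$. Theorem~\ref{thm1} makes $G/G'$ elementary abelian, so $G^p \leq G'$ and $\Phi(G) = G' G^p = G'$. As a subgroup of the abelian exponent-$p$ group $\mathcal{Z}(G)$, $\Phi(G) = G'$ is elementary abelian, so only the inclusion $\mathcal{Z}(G) \leq G'$ remains.

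For this, I would transfer the favorable hypothesis to the $\mathbb{F}_p$-vector space $V = G/G'$ via the quotient map $\pi : G \to V$. Any $\phi$-invariant subspace $W \leq V$ lifts to $\pi^{-1}(W)$, a $\phi$-invariant subgroup containing $G'$; if $W$ is proper in $V$ then $\pi^{-1}(W)$ is proper in $G$, so favorability forces $\phi|_{\pi^{-1}(W)} = \mathrm{id}$, hence $\phi|_W = \mathrm{id}$. Moreover $\phi$ acts non-trivially on $V = G/\Phi(G)$: otherwise Burnside's theorem (applicable since $\gcd(\mathrm{o}(\phi),|\Phi(G)|)=1$) would force $\phi = \mathrm{id}$ on $G$, contradicting favorability. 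The preceding Lemma now applies to $\phi$ on $V$, yielding an irreducible characteristic polynomial. In particular $\phi$ has no non-zero fixed vectors on $V$: a fixed vector would span a $1$-dimensional $\phi$-invariant subspace, contributing a linear factor $x-1$ to the characteristic polynomial, which is incompatible with irreducibility together with $\phi$ non-trivial on $V$. Finally, since $\mathcal{Z}(G)$ is a proper characteristic subgroup, favorability gives $\phi|_{\mathcal{Z}(G)} = \mathrm{id}$, so for every $z \in \mathcal{Z}(G)$ the image $\bar z \in V$ is a $\phi$-fixed vector and hence zero, i.e., $z \in G'$. This gives $\mathcal{Z}(G) \leq G'$, completing the equality $\mathcal{Z}(G) = G' = \Phi(G)$ and showing it is elementary abelian.

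The main obstacle I anticipate is the careful bookkeeping of the transfer of the favorable condition to the quotient $V$, together with the corner case $\dim V = 1$ where irreducibility of the characteristic polynomial is automatic and one must instead read ``no fixed vectors'' directly off from $\phi$ acting non-trivially on $V$. The remaining ingredients --- Theorem~\ref{thm1}, the Corollary, Burnside's theorem, and the preceding Lemma --- fit together fairly mechanically once this reduction is set up.
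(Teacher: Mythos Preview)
Your proof is correct and follows the paper's approach: transfer the favorable hypothesis to $V=G/G'$, apply the preceding Lemma to make $V_\phi$ simple (equivalently $\chi_\phi$ irreducible), and conclude that the characteristic images $\mathcal{Z}(G)/G'$ and $\Phi(G)/G'$ must vanish. One small caveat: the Corollary you invoke for exponent $p$ is stated only for odd $p$, whereas the theorem is meant for all primes; the paper sidesteps this by taking class $\le 2$ from the critical-subgroup discussion and $\Phi(G)=G'$ from simplicity directly, but in fact your own line ``$G/G'$ elementary abelian, so $G^p\le G'$ and $\Phi(G)=G'G^p=G'$'' already works uniformly in $p$, and the elementary-abelian property of $\Phi(G)$ is (as the paper remarks) redundant once $G'=\mathcal{Z}(G)=\Phi(G)$.
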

\begin{proof}
We already know that $G$ is of class at most $2$ and  $V=G/G^\prime$ is an elementary-abelian $p$-group. Let $\phi$ be a $p^\prime$-automorphism group automorphism, such that, if is fixes a proper subgroup of $G$, then it is the identity on that subgroup. Since $G^\prime$ is characteristic, $\phi$ is the identity on $G^\prime$. Consider the module $V_\phi$ over $\mathbb{F}_p[x]$ corresponding to $\phi$. Then from the lemma above we know that the characteristic polynomial $\chi_{\phi}$ is irreducible and $V_\phi$ is simple.

In any finite $p$-group, $G^\prime\subseteq\Phi(G)$ and from above $G^\prime\subseteq\mathcal{Z}(G)$. To show $G^\prime=\mathcal{Z}(G)$, notice that $V_\phi$ is a simple module over $\mathbb{F}_p[x]$ and all submodules are $\phi$-invariant subspaces. So $\mathcal{Z}(G)/G^\prime$ cannot be a nontrivial submodule. Similar is the case with $\Phi(G)$.

So if we assume that $G$ is not elementary-abelian, then $G^\prime=\mathcal{Z}(G)=\Phi(G)$. 
\end{proof}

At this point it is clear, to build a secure and optimal MOR cryptosystem with non-abelian $p$-groups one should
look at special $p$-groups and an automorphism $\phi$ such that $\phi$
is identity on all subgroup it fixes. In particular $\phi$ must
centralize $\Phi(G)$, so smaller the $\Phi(G)$ the better. So it is clear that we should look for groups
with $\Phi(G)$ as small as possible. We conclude that for a
non-abelian $p$-group ($p$ odd) and $p^\prime$-automorphisms the best group is
an extra-special $p$-group of prime exponent. For abelian $p$-groups, we should look only at elementary-abelian $p$-groups. For $p$ even, we still have the extra-special groups but we can use any exponent.

\section{The MOR cryptosystem and elementary abelian $p$-group}
As is well known, an elementary abelian $p$-group is a vector space over $\mathbb{F}_p$ the field of $p$ elements. So one way to look at MOR cryptosystems over an elementary abelian group is MOR cryptosystems over a vector space. If we fix a basis for the vector space, any linear transformation gives rise to a matrix. So the discrete logarithm problem in invertible linear transformations turns out to be the discrete logarithm problem over non-singular matrices. So we need to say a few things about that. Before we do that, we also need to remind our reader that security and speed goes hand in hand. One reason, the discrete logarithm problem in matrices was avoided in cryptography was the belief that matrix exponentiation is much more expensive. The security advantage we gain from the discrete logarithm problem in matrices does not outweigh the cost of matrix exponentiation. This view was put down by Menezes \& Wu~\cite{menezes1}. However with the recent advances in matrix exponentiation by Leedham-Green~\cite{green-brien}, the above argument is no longer valid. We get into the details of this argument in this section.

\subsection{Solving the discrete logarithm problem in non-singular matrices} Let $g$ and $g^m$ belongs to GL$(d,q)$, the discrete logarithm problem is to find $m$. This problem can be easy and hard. For uni-triangular matrices, i.e., matrices with one on the diagonal and arbitrary field element on the upper half and zero on the lower half, it is very easy. On the other hand, with matrices with irreducible characteristic polynomial, the discrete logarithm problem is hard.

Following is the work of Menezes \& Wu~\cite{menezes1}, which is the best known algorithm to solve the discrete logarithm problem in matrices. This algorithm is basically a reduction of the discrete logarithm problem in GL$(d,q)$ to a finite (possibly trivial) extension of $\mathbb{F}_q$. 
\subsection{The Menezes-Wu algorithm}
\begin{description}
\item Input: $g$ and $g^m$.
\item Output: $m$.
\item From $g$, compute the characteristic polynomial $\chi_g$ of $g$.
\item From $g^m$, compute the characteristic polynomial $\chi_{g^m}$ of $g^m$.
\end{description}
Let $\left\{\alpha_1,\alpha_2,\ldots,\alpha_d\right\}$ be the characteristic roots of $g$. This list might contain repeating entries. The characteristic roots lie in some finite (possibly trivial) extension of $\mathbb{F}_q$.
Let $\left\{\beta_1,\beta_2,\ldots,\beta_d\right\}$ be the characteristic roots of $g^m$. This list might contain repeating entries. The roots lie in some finite (possibly trivial) extension of $\mathbb{F}_q$.

Then $\left\{\beta_1,\beta_2,\ldots,\beta_d\right\}$ is  $\left\{\alpha_{i_1}^m,\alpha_{i_2}^m,\ldots,\alpha_{i_d}^m\right\}$, where $(i_1,i_2,\ldots,i_d)$ is $(1,2,\ldots,d)$ permuted. Note that there is no obvious way to order characteristic roots, but following Menezes and Wu, we will assume that this permutation is not going to offer much resistance in computing $m$. In other words, we assume that we can find $\alpha_i$ and $\beta_j$ such that $\alpha_i^m=\beta_j$. Once we have this one can solve for $m\bmod\text{o}(\alpha_i)$, where $\text{o}(\alpha_i)$ is the multiplicative order of $\alpha_i$. From, solving the required numbers of discrete logarithm problems in the suitable extensions and then applying the Chinese remainder theorem, one can solve the discrete logarithm problem in non-singular matrices. Note that the $\alpha_i$ and subsequently the $\beta_j$ will be in some extension field (possibly trivial) of $\mathbb{F}_q$. The largest extension possible is $\mathbb{F}_{q^d}$ and this happens when the \textbf{characteristic polynomial is irreducible}.

The most serious attack on the discrete logarithm problem in a finite field is the sub-exponential attack like the index-calculus attack. In this attack, if we are solving the discrete logarithm problem in $\mathbb{F}_{q^d}$, the time-complexity is
$\exp{\left((c+o(1))(\log{q}^d)^\frac{1}{3}(\log\log{q}^d)^\frac{2}{3}\right)}$, where $c$ is a constant, see~\cite{oliver} and~\cite[Section 4]{koblitz}. It is clear, larger the $d$ more secure is the discrete logarithm problem in matrices. So we can now safely conclude, to work with the discrete logarithm problem in matrices one should work with \textbf{matrices with irreducible characteristic polynomial}.
\subsection{Exponentiation in non-singular matrices} This section is a brief introduction to an amazing algorithm by Leedham-Green~\cite[Section 10]{green-brien} to compute $g^m$ for some $g\in\text{GL}(d,q)$. We only deal with the case where the characteristic polynomial $\chi_g$ of $g$ is irreducible. 
\begin{algorithm}[Leedham-Green]\hspace*{\fill} \\
Input: a matrix $g$ of size $d$ over a finite field $\mathbb{F}_q$ and a positive integer $m$.\\
Output: $g^m$\\
\vspace{-0.5cm}
\begin{description}
\item[] Find a matrix $P$ such that $B=P^{-1}gP$ is in the Frobenius normal form.
\item[] Determine the minimal polynomial $\mathfrak{m}(x)$ of $B$. Since the Smith normal form is sparse, it is easy to compute the minimal polynomial -- it takes $O(d^2)$ field multiplications.
\item[] Compute $t^m\bmod \mathfrak{m}(t)$ in $F[t]/\mathfrak{m}(t)$ as $\mathfrak{l}(t)$.
\item[] Compute $C=\mathfrak{l}(B)$
\item[] Return $PCP^{-1}$. 
\end{description}
\end{algorithm}
Notice that the objective of the above algorithm was to compute the power of an arbitrary matrix. In our case, for a MOR cryptosystem the matrix is not arbitrary, we can choose our matrix. So one can first choose an irreducible polynomial $\mathfrak{m}$ of degree $d$ over $\mathbb{F}_q$. Then choose $g$ to be the companion matrix for that polynomial $\mathfrak{m}$. Since the minimal polynomial divides the characteristic polynomial, the minimal polynomial is $\mathfrak{m}$ as well. So the first two steps and the last step in the above algorithm becomes redundant.

Once $\mathfrak{m}$ is irreducible in the above algorithm the quotient $\mathbb{F}[t]/\mathfrak{m}(t)$ is a field. So the third step is essentially an exponentiation in the field $\mathbb{F}_{q^d}$. So apart from computing the $C$ in the above algorithm, exponentiation of a matrix with irreducible characteristic polynomial is the same as exponentiation in the finite field $\mathbb{F}_{q^d}$.
 
The following is now clear: the discrete logarithm problem in GL$(d,q)$ is almost the same, both in terms of security and speed, to a discrete logarithm problem in $\mathbb{F}_{d^q}$. Note that this conclusion is remarkably different than that of Menezes \& Wu~\cite{menezes1}, where they write-off completely the discrete logarithm problem in matrices.  

Next we show that elementary-abelian $p$-groups are favorable $p$-groups. 
\begin{lemma}
Let $V$ be a vector space over $\mathbb{F}_p$. Let $\phi$ be a non-singular linear transformation on $V$. If $p|\text{o}(\phi)$, then $V$ has a proper $\phi$-invariant subspace.
\end{lemma}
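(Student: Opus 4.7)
The plan is to exploit the fact that a linear map whose order is a power of $p$ behaves very differently in characteristic $p$ than in characteristic $0$, because Frobenius-like identities collapse the situation to nilpotency.

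First I would extract a $p$-torsion piece. Set $\psi = \phi^{o(\phi)/p}$. Since $p \mid o(\phi)$, this exponent is a positive integer, and $\psi$ has order exactly $p$; in particular $\psi \neq I$, for otherwise $\phi$ would have order dividing $o(\phi)/p$. Since $\psi^{p} = I$, we can write
\[
0 = \psi^{p} - I = (\psi - I)^{p},
\]
where the last equality uses that the binomial coefficients $\binom{p}{k}$ with $1\le k\le p-1$ vanish in $\mathbb{F}_p$. Hence $\psi - I$ is a nilpotent operator on $V$, and it is nonzero because $\psi \neq I$.

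Next I would consider $W = \ker(\psi - I)$, the subspace of $\psi$-fixed vectors. A nonzero nilpotent operator on a nonzero finite-dimensional space has nonzero kernel that is not all of $V$, so $W$ is a proper, nontrivial subspace. The last step is to check $\phi$-invariance: since $\psi$ is a power of $\phi$, it commutes with $\phi$, so for $v \in W$ we have $\psi(\phi v) = \phi(\psi v) = \phi v$, giving $\phi v \in W$. Thus $W$ is a proper $\phi$-invariant subspace of $V$.

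There isn't really a hard step here; the only place to be careful is the double usage of ``proper'', i.e., that $W$ is both nonzero (from nilpotency of $\psi - I$) and not equal to $V$ (from $\psi \neq I$). Everything else is immediate once one reduces to the order-$p$ piece and invokes the characteristic-$p$ identity $(\psi - I)^p = \psi^p - I$.
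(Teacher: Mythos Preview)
Your proof is correct and follows essentially the same route as the paper's. The paper writes $\phi=\phi_p\phi_{p'}$ and looks at the eigenspace of $1$ for the $p$-part $\phi_p$, whereas you pass directly to $\psi=\phi^{o(\phi)/p}$ of exact order $p$; in both cases the characteristic-$p$ identity $(\psi-I)^p=\psi^p-I$ yields a nontrivial proper fixed-point subspace, and commutativity with $\phi$ gives invariance.
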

\begin{proof}
Let $A=\langle\phi\rangle$. Then the given condition implies that $p||A|$. Considering the fact that any finite abelian group is the direct product of its Sylow subgroups, we see that one can write $\phi=\phi_p\phi_{p^\prime}$. Where $\phi_p$ and $\phi_{p^\prime}$ are $p$ and $p^\prime$ non-trivial automorphism respectively. From the fact that $(x^q-1)=(x-1)^q$ for any $p$-power $q$, we see that all the eigenvalues of $\phi_p$ are $1\in\mathbb{F}_p$. Let $\mathcal{E}$ be the eigenspace of $1$ in $V$. Clearly $\mathcal{E}$ is a proper subspace of $V$. Let $v\in\mathcal{E}$. Then $\phi_p\phi_{p^\prime}(v)=\phi_{p^\prime}\phi_p(v)$, which implies $\phi_p\phi_{p^\prime}(v)=\phi_{p^\prime}(v)$. This proves that $\phi_{p^\prime}(v)\in\mathcal{E}$. So $\mathcal{E}$ is a $\phi$-invariant proper subspace of $V$.  
\end{proof}
\begin{theorem}An elementary-abelian $p$-group is a favorable $p$-group.
\end{theorem}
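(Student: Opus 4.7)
The plan is to exhibit a single explicit non-identity $p'$-automorphism of the elementary abelian $p$-group $V$ whose only invariant subspaces are $0$ and $V$; then the hypothesis in the definition of favorable $p$-group is satisfied vacuously, because $\phi$ fixes no proper nonzero subgroup at all, and is trivially the identity on $0$.

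First, I would identify $V$ with $\mathbb{F}_p^d$ where $d$ is the rank of $V$, so that $\mathrm{Aut}(V) = \mathrm{GL}(d,\mathbb{F}_p)$ and subgroups of $V$ are exactly $\mathbb{F}_p$-subspaces. Pick a monic irreducible polynomial $f(x) \in \mathbb{F}_p[x]$ of degree $d$ (one exists for every $d \geq 1$, and for $d=1$ we take $f(x)=x-\alpha$ for some $\alpha \neq 0,1$, which is possible whenever $p>2$; the trivial case $p=2$, $d=1$ is excluded since $V=\mathbb{Z}/2$ admits no nontrivial automorphism at all). Let $\phi$ be the companion matrix of $f$; then $\chi_\phi = f$ is irreducible, and $\phi \ne 1$ by the choice of $f$.

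Second, Lemma~\ref{u1} applies directly and tells us that the only $\phi$-invariant subspaces of $V$ are $0$ and $V$ itself. In particular, any proper subgroup $H < V$ fixed by $\phi$ must equal $0$, and $\phi$ is trivially the identity on $0$. This verifies the set-wise fixing condition in the definition of favorable $p$-group.

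Third, I must verify that $\phi$ is a $p'$-automorphism. The cleanest route is the contrapositive of the lemma immediately preceding the theorem: if $p \mid \mathrm{o}(\phi)$, then $V$ has a proper $\phi$-invariant subspace (namely the fixed eigenspace of the $p$-part of $\phi$), which contradicts the previous step. Hence $\gcd(\mathrm{o}(\phi), p) = 1$ and $\phi$ is a nontrivial $p'$-automorphism. (Equivalently one can argue directly that the companion matrix of an irreducible polynomial of degree $d$ over $\mathbb{F}_p$ has order dividing $p^d - 1$, which is coprime to $p$.) Combining these three steps shows $V$ is favorable. The main obstacle is essentially nonexistent: the heavy lifting has already been done by Lemma~\ref{u1} and the preceding lemma on $p$-divisibility of the order; the only mild care is in choosing $f$ so that $\phi$ is not the identity, which is handled by a trivial case distinction.
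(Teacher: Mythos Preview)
Your proof is correct and follows essentially the same approach as the paper: choose an automorphism with irreducible characteristic polynomial, invoke Lemma~\ref{u1} for the invariant-subspace condition, and use the preceding lemma (Lemma~5.2) to verify the $p'$-condition. You are simply more explicit about the construction (via the companion matrix) and more careful about the degenerate case $p=2$, $d=1$, which the paper's statement and proof gloss over.
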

\begin{proof}
An elementary abelian $p$-group $V$ is a vector space over $\mathbb{F}_p$. Then the automorphism group of $V$ is GL$(V)$. Let $\phi$ be an automorphism with irreducible characteristic polynomial. Then $\phi$ is a $p^\prime$-automorphism. 
Then Lemma ~\ref{u1} proves the rest. 
\end{proof}
\section{The extra-special $p$-groups and its automorphism group}
As we saw before, if we are dealing with $p^\prime$-automorphisms, there are only two interesting class of finite $p$-groups. One is the elementary abelian $p$-group and the other are the extra-special $p$-groups. The case for extra special $p$-groups is interesting, because it provides us with non-abelian $p$-groups which is presented in the \emph{power-commutator} form and provides us with a secure MOR cryptosystem, thus showing that abstract presentations can be useful.
As we will see, the security with $p^\prime$-automorphisms reduces to the discrete logarithm problem in non-singular matrices. This enables us to argue that working with $p^\prime$-automorphisms of a $p$-group, one has no advantage from working with matrices. However, the case with $p$-automorphisms is not quite settled yet. We will see, as an example with the central automorphisms of the extra-special p-groups that there are some potential with $p$-groups. The potential is the impossibility of the reduction to matrices, which killed the $p^\prime$-automorphisms.
\subsection{Extra-special $p$-groups}
It is well known that any special $p$-group is of exponent at most $p^2$. We saw earlier that for odd prime $p$ we can concentrate on groups of exponent $p$. So for an odd prime $p$ our principal interest is in the extra-special $p$-group of exponent $p$. Our principal reference is Gorenstein~\cite[Section 5.5]{gorenstein}. We briefly summarize few facts about the extra-special $p$ group of exponent $p$ denoted by $G$.
\begin{description}
\item[] The order of $G$ is $p^{2n+1}$ for some positive integer $n$. The cardinality of the minimal set of generators is $2n$ and we denote that set by $\{x_1,y_1,x_2,y_2,\ldots,x_n,y_n\}$. There is a relation $[x_i,y_i]=z$, where $\mathcal{Z}(G)=\langle z\rangle$ and $z^p=1$. Furthermore, $[x_i,x_j]=1$ and $[x_i,y_j]=1$ for $i\neq j$.
\item[] The group $G$ is the central product of $n$ copies of the group of order $p^3$ given by
$$\langle x,y,z\; | \; x^p=y^p=z^p=1, [x,z]=1,[y,z]=1,[x,y]=z\rangle.$$
\item[] In the group $G$, $G=G^\prime=\Phi(G)$ and is cyclic of order $p$.
\end{description}  
In a $p$-group, finding all automorphisms is often a very hard job. However, for an extra-special $p$-groups it is not that hard. The automorphisms were studied extensively by Winter ~\cite{winter}. The study of automorphisms of an extra-special $p$-group is not that hard because of a bilinear map $B:G/G^\prime\times G/G^\prime\rightarrow \mathbb{F}_p$. The map is defined as follows, let $\bar{x},\bar{y}\in G/G^\prime$, then $[x,y]=z^a$ for some integer $a$. Then $B(\bar{x},\bar{y})=\bar{a}$, where $\bar{a}=a\bmod p$. It is known that $B$ is an alternating, non-degenerate bilinear form on $G/G^\prime$.

We will not do a detailed presentation of the automorphisms of the extra-special $p$-group of prime exponent. An interested reader can find that in Winter ~\cite{winter}. However, to facilitate further discussion we have to describe them briefly.

Since an extra-special $p$ group is of class $2$, we have that $[x^n,y]=[x,y]^n$. Recall that the center $\mathcal{Z}(G)$ is of prime order and any automorphism of $\mathcal{Z}(G)$ can be lifted to an automorphism of $G$. So we have a complete description of the automorphisms of $G$, that are not identity on $\mathcal{Z}(G)$.

So now we have to concentrate on the automorphisms that fix $\mathcal{Z}(G)$. It was shown by Winter that an automorphism $\phi$ of $G$ is an automorphism of $G/\mathcal{Z}(G)$ if and only if it is the identity on $\mathcal{Z}(G)$.

It was further shown that for prime exponent, the automorphisms that fix $\mathcal{Z}(G)$ is the symplectic group Sp$(2n,p)$. Winter denotes this subgroup of the automorphism group by $H$ and has shown that it is a normal subgroup of the automorphism group.

To summarize, there are two kinds of automorphisms:
\begin{description}
\item[a] Automorphisms that are not the identity on the center $\mathcal{Z}(G)$ of $G$. Since, any automorphism of the center can be extended to an automorphism of the whole group, and the center is cyclic. We have a complete understanding of these automorphisms. They are uninteresting to our cause.
\item[b] One that are identity on the center. These automorphisms form a normal subgroup of the automorphism group of $G$. We will call them $H$.
\end{description}
For obvious reasons we are interested in b above. Let $\phi$ be an automorphism that centralizes the center. Winter has shown that, $\bar{\phi}:G/\mathcal{Z}(G)\rightarrow G/\mathcal{Z}(G)$ is an automorphism of $G/\mathcal{Z}(G)$ preserving the bilinear form $B$. We will abuse the notation a little bit and call the automorphism on the central quotient $\phi$ as well.

An interesting normal subgroup of $H$ is the group of inner automorphisms $I$. Using the fact that the commutator $G^\prime\subseteq\mathcal{Z}(G)$ and the identity $ab=ba[a,b]$ for any $a,b\in G$, it is clear that an inner automorphism is of the form
\begin{eqnarray*}
&x_i\mapsto x_iz^{d_i}&\\
&y_i\mapsto y_iz^{d_i^\prime}&\text{where}\;\; 0\leq d_i,d_i^\prime <p.
\end{eqnarray*}
From the fact, the group of the inner automorphisms $I$ is isomorphic to $G/\mathcal{Z}(G)$, it follows that there are $p^{2n}$ inner automorphisms. It also follow from a simple counting argument on all possible choices of $d_i$ and $d_i^\prime$. From our understanding of the inner automorphisms, the following proposition is clear:
\begin{proposition}\label{u2}
An automorphism $\phi$ of $G$ is an inner automorphism if and only if it is the identity on $\mathcal{Z}(G)$ and $G/\mathcal{Z}(G)$. The inner automorphisms commute and constitutes the group of central automorphisms.
\end{proposition}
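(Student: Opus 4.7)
The plan is to break the proposition into three pieces: the forward implication, the backward implication (via a counting argument using the explicit parametrization of inner automorphisms given just before the proposition), and finally the two short assertions about commutativity and about exhausting the central automorphisms.

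For the forward direction, I would start with a generic inner automorphism $\phi_g : x \mapsto gxg^{-1}$. Since every element of $\mathcal{Z}(G)$ commutes with $g$, the restriction $\phi_g|_{\mathcal{Z}(G)}$ is the identity. For the quotient $G/\mathcal{Z}(G)$, I would use that $G$ has nilpotency class $2$, so $G^\prime \subseteq \mathcal{Z}(G)$; hence $\phi_g(x)x^{-1} = [g,x] \in G^\prime \subseteq \mathcal{Z}(G)$ for every $x \in G$, showing $\bar{\phi}_g = \mathrm{id}$ on $G/\mathcal{Z}(G)$. This direction is routine.

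For the converse, I would use the given presentation. Suppose $\phi$ fixes $\mathcal{Z}(G)$ element-wise and induces the identity on $G/\mathcal{Z}(G)$. Then on each generator $\phi(x_i) = x_i z^{d_i}$ and $\phi(y_i) = y_i z^{d_i^\prime}$ for some integers $d_i,d_i^\prime \in \{0,1,\dots,p-1\}$. One must check that an arbitrary choice of these $2n$ exponents extends to an endomorphism; because $z$ is central and has order $p$, the relations $[x_i,y_i]=z$, $[x_i,x_j]=[x_i,y_j]=1$ for $i\ne j$, and the exponent relations $x_i^p=y_i^p=z^p=1$ are all preserved (e.g.\ $(x_iz^{d_i})^p=x_i^p z^{pd_i}=1$ using exponent $p$ and centrality). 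An endomorphism agreeing with the identity modulo a central subgroup is bijective, so every such choice yields an automorphism. This gives at most $p^{2n}$ such automorphisms. But the forward direction combined with the explicit description of inner automorphisms preceding the proposition shows that every inner automorphism has precisely this form; together with $|I| = |G/\mathcal{Z}(G)| = p^{2n}$, the counts coincide, so equality of sets follows.

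The commutativity is then a one-line computation: if $\phi_1(x_i)=x_iz^{d_i}$, $\phi_1(y_i)=y_iz^{d_i^\prime}$ and $\phi_2(x_i)=x_iz^{e_i}$, $\phi_2(y_i)=y_iz^{e_i^\prime}$, then both $\phi_1\phi_2$ and $\phi_2\phi_1$ send $x_i \mapsto x_iz^{d_i+e_i}$ and $y_i \mapsto y_iz^{d_i^\prime+e_i^\prime}$, since each $\phi_j$ fixes $z$. Finally, the group of central automorphisms is by definition the automorphisms $\phi$ satisfying $x^{-1}\phi(x)\in\mathcal{Z}(G)$ for all $x$, i.e.\ those inducing the identity on $G/\mathcal{Z}(G)$; since we are working inside the subgroup $H$ of automorphisms that are identity on $\mathcal{Z}(G)$, these are exactly the automorphisms characterized above, and hence equal $I$. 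The main (minor) obstacle is the verification that every assignment of the exponents $d_i,d_i^\prime$ really does extend to a well-defined automorphism, but this is handled cleanly by exploiting centrality of $z$ and exponent $p$.
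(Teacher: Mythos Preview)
Your argument is essentially the one the paper intends: the paper states the proposition as ``clear'' from the explicit description of inner automorphisms and the count $|I|=|G/\mathcal{Z}(G)|=p^{2n}$ given immediately before, and your proof simply spells out that counting argument in detail. One small point: in the last step you assert that central automorphisms lie in $H$ by saying ``since we are working inside $H$'', but this is something to prove, not assume; it follows because $\mathcal{Z}(G)=G^\prime$ here, so if $\phi(g)=gz_g$ with $z_g$ central then $\phi([x,y])=[\phi(x),\phi(y)]=[xz_x,yz_y]=[x,y]$, forcing $\phi$ to fix $G^\prime=\mathcal{Z}(G)$ pointwise. Also note that the verification that every choice of exponents extends to an automorphism is not actually needed for the counting: the parametrization already gives \emph{at most} $p^{2n}$ automorphisms trivial on $\mathcal{Z}(G)$ and $G/\mathcal{Z}(G)$, and $I$ supplies at least that many.
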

It is known ~\cite[3E]{winter}, $H/I$ is isomorphic to Sp$(2n,p)$. Recall that $G/\mathcal{Z}(G)$ is a symplectic vector space over $\mathbb{F}_p$. We next show that the extra-special $p$-group of prime exponent is a favorable $p$-group.
\begin{theorem}
For a odd prime $p$, the extra-special $p$ group of exponent $p$ is a favorable $p$-group.
\end{theorem}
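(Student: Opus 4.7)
The plan is to exhibit a non-trivial $p^\prime$-automorphism $\phi \in H$ (in Winter's notation) whose induced action $\bar\phi$ on the symplectic space $V = G/\mathcal{Z}(G)$ has irreducible characteristic polynomial over $\mathbb{F}_p$, and then invoke Lemma~\ref{u1} together with the identity $\mathcal{Z}(G) = \Phi(G)$ (which holds in any extra-special $p$-group) to conclude favorability.

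First, I would construct $\sigma \in \mathrm{Sp}(V,B)$ of order prime to $p$ whose characteristic polynomial is irreducible of degree $2n$. Identifying $V$ with $\mathbb{F}_{p^{2n}}$ as an $\mathbb{F}_p$-vector space, let $\alpha \in \mathbb{F}_{p^{2n}}^\times$ have order $p^n+1$ and let $\sigma$ be multiplication by $\alpha$. Because the multiplicative order of $p$ modulo $p^n+1$ is exactly $2n$, the $\mathbb{F}_p$-minimal polynomial of $\alpha$ has degree $2n$, so $\chi_\sigma$ is irreducible; the order of $\sigma$ equals $p^n+1$, which is coprime to $p$. For any $\beta \in \mathbb{F}_{p^{2n}} \setminus \mathbb{F}_{p^n}$, the form $\tilde B(x,y) = \mathrm{Tr}_{\mathbb{F}_{p^{2n}}/\mathbb{F}_p}\bigl(\beta(x y^{p^n} - x^{p^n} y)\bigr)$ is alternating, non-degenerate, and $\sigma$-invariant (since $\alpha \cdot \alpha^{p^n} = 1$); as any two non-degenerate alternating $\mathbb{F}_p$-forms on a $2n$-dimensional space are equivalent, this places $\sigma$ inside a conjugate of $\mathrm{Sp}(V,B)$, which suffices. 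Since $I$ is a normal $p$-subgroup of $H$ and $\sigma \in H/I$ has $p^\prime$-order, Schur--Zassenhaus (or a direct argument in a cyclic-by-$p$ extension) produces a lift $\phi \in H$ of the same order as $\sigma$. By construction $\phi$ is a non-trivial $p^\prime$-automorphism of $G$ that is the identity on $\mathcal{Z}(G)$ and induces $\sigma$ on $V$.

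Finally, I would verify the favorability condition. Let $H_0 < G$ be a proper $\phi$-invariant subgroup and set $\overline{H_0} = H_0\mathcal{Z}(G)/\mathcal{Z}(G)$, a $\bar\phi$-invariant subspace of $V$. By Lemma~\ref{u1}, either $\overline{H_0} = 0$ or $\overline{H_0} = V$. If $\overline{H_0} = V$ then $H_0\mathcal{Z}(G) = G$, hence $H_0\Phi(G) = G$, which contradicts $H_0 < G$ because $\Phi(G)$ consists of non-generators. Thus $H_0 \leq \mathcal{Z}(G)$, and $\phi|_{H_0}$ is the identity since $\phi$ is trivial on $\mathcal{Z}(G)$ by construction. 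The only substantive step is producing the symplectic "Singer" element $\sigma$ with irreducible $\chi_\sigma$; the norm-one torus construction of the extension $\mathbb{F}_{p^{2n}}/\mathbb{F}_{p^n}$ gives it cleanly and is where I would expect the main difficulty to lie for a reader seeing this for the first time.
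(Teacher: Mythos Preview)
Your argument is correct and follows the same route as the paper: choose an element of $\mathrm{Sp}(2n,p)$ with irreducible characteristic polynomial, view it as an automorphism of $G$ trivial on the center, invoke Lemma~\ref{u1}, and finish with the Frattini non-generator argument. The only difference is that the paper simply asserts the existence of such a symplectic element (and its lift to $H$), whereas you supply both explicitly via the norm-one torus of $\mathbb{F}_{p^{2n}}/\mathbb{F}_{p^n}$ and Schur--Zassenhaus; this extra care is a genuine improvement, since the paper's proof leaves the existence of a $\phi\in\mathrm{Sp}(2n,p)$ with irreducible $\chi_\phi$ unjustified.
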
 
\begin{proof}
Let $\phi\in\text{Sp}(2n,p)$, such that $\chi_\phi$ is irreducible. From the above discussion, we can consider $\phi$ to be an automorphism of $G$ that is the identity on $G^\prime$. According to Lemma 4.5, there are no proper $\phi$-invariant subspaces of $G/G^\prime$, and from Lemma 5.2 $\phi$ is a $p^\prime$-automorphism. Now assume that $H$ is a proper $\phi$-invariant subspace of $G$. Then consider $HG^\prime$. Notice that $G^\prime=\Phi(G)$ and furthermore $\Phi(G)$ is the set of non-generators of $G$. Then it follows that $HG^\prime$ is a proper subgroup and so $HG^\prime/G^\prime$ is a proper $\phi$-invariant subspace of $G/G^\prime$. Which implies that $HG^\prime\subseteq G^\prime$ and furthermore $H\subseteq G^\prime$. 
\end{proof}
\begin{corollary}
For a odd prime $p$, the extra-special $p$-group of exponent $p$ is self-critical.
\end{corollary}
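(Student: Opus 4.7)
The plan is to derive the corollary directly from the preceding theorem and the definition of a Thompson critical subgroup. Let $G$ be the extra-special $p$-group of exponent $p$, and let $\phi$ be the $p^\prime$-automorphism produced in the previous theorem, i.e., the automorphism of $G$ that is the identity on $G^\prime=\mathcal{Z}(G)$ and whose action on $G/\mathcal{Z}(G)$ has irreducible characteristic polynomial. The key property, already established, is that $\phi$ fixes no proper subgroup of $G$ except by acting as the identity on it (in other words, $G$ is favorable with witness $\phi$).

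First, I would fix an arbitrary Thompson critical subgroup $C$ of $G$ and recall from the theorem attributed to Thompson that $C$ is characteristic in $G$. In particular $\phi(C)=C$, so $C$ is a $\phi$-invariant subgroup of $G$. Second, I would split into cases on whether $C$ is proper. If $C$ is a proper subgroup, the favorable property forces $\phi|_C$ to be the identity, so $\phi$ induces the trivial automorphism on $C$. But the fourth property in the definition of a critical subgroup states that every nontrivial $p^\prime$-automorphism of $G$ induces a nontrivial automorphism on $C$; since $\phi$ is a nontrivial $p^\prime$-automorphism of $G$, this is a contradiction.

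Therefore the assumption that $C$ is proper is untenable, and any critical subgroup must coincide with $G$ itself. This is precisely the definition of a self-critical group, so the corollary follows.

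The one potential snag is making sure the nontrivial action requirement in the critical subgroup theorem is invoked correctly: it is a statement about every $p^\prime$-automorphism of $G$, and the $\phi$ provided by the preceding theorem is indeed a $p^\prime$-automorphism (as noted there, this is forced by the irreducibility of $\chi_\phi$ and Lemma 5.2). Beyond that, the proof is essentially a two-line deduction from the definitions, so no extra calculation is needed.
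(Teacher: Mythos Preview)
Your proof is correct and, in fact, cleaner than the paper's. You invoke the fourth defining property of a Thompson critical subgroup (every nontrivial $p'$-automorphism of $G$ restricts nontrivially to $C$) and play it directly against the favorable property of $G$: if $C$ were proper, the witnessing $p'$-automorphism $\phi$ would have to act trivially on $C$, contradicting that fourth property. The paper instead uses the third property, $\mathcal{C}_G(C)=\mathcal{Z}(C)$, to rule out $C\subseteq\mathcal{Z}(G)$, and then passes to the simple $\mathbb{F}_p[x]$-module $V_\phi=G/G'$: the image $CG'/G'$ is a $\phi$-invariant subspace, hence either $0$ or all of $V_\phi$; having excluded the former, one gets $CG'=G$ and then $C=G$ via the Frattini non-generator property. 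Your route avoids the module-theoretic step and the Frattini argument entirely, at the cost of leaning on the (somewhat deeper) fourth clause of the critical-subgroup theorem; the paper's route is more self-contained in that it re-uses the simplicity of $V_\phi$ already established in Lemma~4.4. Both are perfectly valid deductions of the corollary.
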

\begin{proof}
Let $G$ denote the extra-special $p$-group of exponent $p$ and $C$ be a critical subgroup of $G$. Then the condition $\text{C}_G(C)=\mathcal{Z}(G)$ implies that $C$ is not contained in $\mathcal{Z}(G)$. From the above theorem $G$ is a favorable $p$-group. Then there is a corresponding automorphism $\phi$. Let $V=G/G^\prime$ and construct $V_\phi$ and it is known to be simple. Consider the subgroup $CG^\prime$.
Then $CG^\prime$ is either the whole group or the center $\mathcal{Z}(G)$. Since it can't be $\mathcal{Z}(G)$, it is the whole group. Now notice that $G^\prime=\Phi(G)$ and $\Phi(G)$ is the set of non-generators of $G$. It follows that if $CG^\prime=G$, $C=G$. So $G$ is self-critical.
\end{proof}
\subsection{The case when $p=2$} In this case a theorem of Winter ~\cite[Theorem 1(c)]{winter} comes in handy.
\begin{theorem} Let $P$ be an extra-special group of order $2^{2n+1}$. Subgroups $H$ and $I$ are as defined earlier. 
Then $H/I$ is isomorphic to the orthogonal group $O_\varepsilon(2n,2)$ of order $2^{n(n-1)+1}(2^n-\varepsilon)\prod\limits_{i=1}^{n-1}\left(2^{2i}-1\right)$. Here, $\varepsilon=1$ if $P$ is isomorphic to the central product of $n$ dihedral groups of order $8$ and $\varepsilon=-1$ if $P$ is isomorphic to the central product of $n-1$ dihedral group of order $8$ and a quaternion group. 
\end{theorem}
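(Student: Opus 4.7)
The plan is to parallel the odd-prime analysis, with the crucial modification that for $p=2$ the squaring map is no longer trivial on $P/\mathcal{Z}(P)$. Since $P$ has class $2$, for any $x\in P$ we have $x^2\in P'=\mathcal{Z}(P)\cong\mathbb{F}_2$, so squaring descends to a function $Q:V\to\mathbb{F}_2$ on $V=P/\mathcal{Z}(P)$. Using the class-$2$ identity $(xy)^2=x^2y^2[y,x]$, one verifies
\[
Q(\bar x+\bar y)=Q(\bar x)+Q(\bar y)+B(\bar x,\bar y),
\]
so $Q$ is a quadratic form whose polarization is the commutator form $B$ introduced earlier. The pair $(V,Q)$ is then a non-degenerate quadratic space of dimension $2n$ over $\mathbb{F}_2$.

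Next I would check that the assignment $\phi\mapsto\bar\phi$ defines a homomorphism $H\to O(V,Q)$: an element $\phi\in H$ preserves $B$ because it centralizes $\mathcal{Z}(P)$, and preserves $Q$ because $\phi(x^2)=\phi(x)^2$. The kernel consists of automorphisms fixing both $\mathcal{Z}(P)$ and $V$ pointwise; by the $p=2$ analogue of Proposition~\ref{u2} (which still follows from $ab=ba[a,b]$ together with $P'\subseteq\mathcal{Z}(P)$), this kernel is exactly $I$.

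The main obstacle will be surjectivity: given $\bar\phi\in O(V,Q)$, one must lift it to a genuine automorphism of $P$. The idea is to fix a symplectic basis $\{\bar x_i,\bar y_i\}$ of $V$, pick preimages in $P$ of $\bar\phi(\bar x_i)$ and $\bar\phi(\bar y_i)$, and send the generators $x_i,y_i$ to those preimages. The defining power-commutator relations (commutators prescribed by $B$, squares by $Q$) are preserved precisely because $\bar\phi$ preserves both forms, so the assignment extends to an endomorphism and, by a dimension count, an automorphism of $P$.

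Finally I would identify $O(V,Q)$ with the type-$\varepsilon$ orthogonal group. A Witt decomposition of $(V,Q)$ has maximal isotropic dimension $n$ (giving $\varepsilon=+1$) exactly when every generating pair $(x_i,y_i)$ satisfies $x_i^2=y_i^2=1$, i.e.\ when each central factor is $D_8$; a quaternion relation $x_1^2=y_1^2=z$ makes one of the $n$ planes anisotropic, reduces the Witt index to $n-1$, and yields $\varepsilon=-1$. The stated order formula is then the classical count of $|O^\varepsilon(2n,2)|$, and the full technical details can be imported from Winter~\cite[Theorem~1(c)]{winter}.
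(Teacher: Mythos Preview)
The paper does not actually prove this theorem: it simply quotes Winter~\cite[Theorem~1(c)]{winter} and immediately uses the statement. Your sketch is correct and is precisely the standard argument that Winter gives --- the passage from the symplectic group (odd $p$) to the orthogonal group ($p=2$) via the quadratic form $Q(\bar x)=x^2$ with polarization $B$, the identification of $\ker(H\to O(V,Q))$ with $I$, the lifting of orthogonal maps back to automorphisms of $P$, and the Witt-index computation distinguishing the $D_8$-only case from the case with a quaternion factor.
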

From the above theorem, by selecting appropriate matrix with irreducible characteristic polynomial, it is easy to see that the case $p=2$ follows the exact same pattern as that of $p\neq 2$. So we won't dwell with $p=2$ any further.
\section{MOR cryptosystems on finite $p$-groups using $p$-automorphisms} In the last section we looked at the $p^\prime$-automorphisms. Now we look at the $p$-automorphisms. Our standard reference for $p$-automorphisms is Khukhro ~\cite{khukro}.

To recall, we looked at the exponent-$p$ central series of a $p$-group. It is known that this series has elementary abelian sections. There are two cases with $p$-automorphisms.
\begin{description}
\item[a] The automorphism $\phi$ is not identity on at least one section of the series.
\item[b] The automorphism $\phi$ is identity on all the sections.
\end{description}

In the case a above, one can not build a secure MOR cryptosystem. The reason is as follows:
\begin{theorem}
Let $V$ be a vector space over $\mathbb{F}_q$, a field of characteristic $p>0$. Let $\phi$ be a $p$-automorphism. Then $\phi$ can be written as a block-diagonal matrix with $1$ in the diagonal. Phrased differently, all the eigenvalues of $\phi$ are $1$. 
\end{theorem}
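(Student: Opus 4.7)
The plan is to exploit the fact that in characteristic $p$ the Frobenius identity collapses $x^{p^k}-1$ completely. Since $\phi$ is a $p$-automorphism, its order divides some power $p^k$ of $p$, so $\phi$ satisfies the polynomial equation $\phi^{p^k}=I$. Consequently, the minimal polynomial $\mathfrak{m}_\phi(x)$ of $\phi$ divides $x^{p^k}-1$ in $\mathbb{F}_q[x]$.

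The crucial step is the factorization $x^{p^k}-1=(x-1)^{p^k}$, which holds because the Frobenius map is a ring homomorphism in characteristic $p$ and $1^{p^k}=1$. Since $\mathbb{F}_q[x]$ is a unique factorization domain and $(x-1)$ is irreducible, the only monic divisors of $(x-1)^{p^k}$ are powers $(x-1)^l$ with $0\leq l\leq p^k$. Therefore $\mathfrak{m}_\phi(x)=(x-1)^l$ for some $l$, and every root of the characteristic polynomial $\chi_\phi$ (which shares its irreducible factors with $\mathfrak{m}_\phi$) must equal $1$, even after passing to the algebraic closure of $\mathbb{F}_q$. This establishes the eigenvalue claim.

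For the block-diagonal form, I would then invoke the Jordan canonical form: since all eigenvalues lie in $\mathbb{F}_q$ (indeed, all equal $1$), the rational canonical form and Jordan form coincide, and $\phi$ is conjugate over $\mathbb{F}_q$ to a direct sum of Jordan blocks $J_{n_i}(1)$, each of which is an upper-triangular matrix with $1$'s on the diagonal. This yields the desired block-diagonal decomposition.

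There is no genuine obstacle here; the only point requiring care is to emphasize that the argument rests squarely on the characteristic-$p$ identity $x^{p^k}-1=(x-1)^{p^k}$, which is precisely what distinguishes $p$-automorphisms from $p^\prime$-automorphisms and explains why this case forces the algebraically least informative spectrum. A brief remark could be added to stress that the same conclusion fails in characteristic zero (or in characteristic coprime to the order of $\phi$), which is exactly the content of the preceding sections.
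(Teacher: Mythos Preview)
Your argument is correct and is exactly the standard proof: the identity $x^{p^k}-1=(x-1)^{p^k}$ in characteristic $p$ forces the minimal polynomial of $\phi$ to be a power of $(x-1)$, whence all eigenvalues equal $1$ and the Jordan form over $\mathbb{F}_q$ consists of unipotent blocks. The paper itself does not give a proof but simply cites Khukhro~\cite[Theorem 2.5]{khukro}; your write-up supplies precisely the details one finds there, so there is nothing to contrast.
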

\begin{proof}
The theorem is well-known, see ~\cite[Theorem 2.5]{khukro}.
\end{proof}
Once we have this theorem, the fact that the discrete logarithm problem in that matrix is easy follows from the following observation and the fact that the power of a block diagonal is the power of the respective blocks written as a block diagonal matrix maintaining the order of the block:
\[
\begin{pmatrix}
1	& 1	& \ast & \dots	 & \ast     \\
0	& 1 	& 1    & \dots  & 0 	  \\
\vdots	& 0 	& \ddots&\ddots & \vdots \\
0 	& \dots &\dots & 0	 & 1
\end{pmatrix}^m=\begin{pmatrix}
1	& m	& \ast & \dots	 & \ast     \\
0	& 1 	& m    & \dots  & 0 	  \\
\vdots	& 0 	& \ddots&\ddots & \vdots \\
0 	& \dots &\dots & 0	 & 1
\end{pmatrix}.
\]
This proves that the case a above is useless.

However, the case b above is of immense interest to us. We will give an example of this kind of automorphism. The reason for immense interest is as follows: anyone who is trying to build a new cryptosystem, will want to build a new cryptosystem. In the case of $p^\prime$-automorphisms, in the MOR cryptosystem we saw, the security can be reduced to that of the discrete logarithm problem in matrices. The discrete logarithm problem in matrices is not a new cryptographic primitive. In this case (b above) we have a real good possibility of a new cryptographic primitive.

Let us look at the situation in some details. There are two subgroups of the automorphism group that we are interested in. One is the group of central automorphisms and the other is the group of inner automorphisms.
\subsection{Central automorphisms} Most central automorphisms are $p$-automorphisms. To quote Curran and McCaughan ~\cite{curran}, ``So, roughly speaking, most of the central automorphisms are of $p$-power order''.

Central automorphisms are the centralizer of the group of inner automorphisms in the automorphism group, they form a normal subgroup in the automorphism group. Let $\phi$ be a central automorphism, then $\phi(g)=gz_g$, $z_g\in\mathcal{Z}(G)$. It is clear from the definition that central automorphisms centralize the commutator subgroup. Now take an example of a finite $p$-group $G$, such that $\mathcal{Z}(G)\subseteq G^\prime$. In this group, for a $g\in G$, we have $\phi(g)=gz_g$ and $\phi^m(g)=gz_g^{m}$. So from $g^{-1}\phi(g)$ and $g^{-1}\phi^{m}(g)$, the discrete logarithm problem in the automorphism $\phi$ reduces to the discrete logarithm problem in $z_g\in\mathcal{Z}(G)$. This is exactly the case with the extra-special $p$-group (see Proposition~\ref{u2}). In the case of the extra-special $p$-group of prime exponent, a central automorphisms acts as the identity in both $\mathcal{Z}(G)$ and $G/\mathcal{Z}(G)$. So the obvious way to reduce an automorphism  to matrices over $\mathbb{F}_p$ do not work. However in this case, as demonstrated earlier, it reduces to the discrete logarithm problem in the center. The open question is, can their be other (secure) situations in which the discrete logarithm problem in the automorphism is not the discrete logarithm problem in the usual sense.
\subsection{Inner automorphisms} The group of inner automorphisms of a $p$-group $G$ is a $p$-group. Let $G=G_1\unrhd G_2\unrhd\ldots\unrhd G_k=1$ be a sequence of subgroups in a $p$-group $G$. Let $g\in\text{C}_G(G_2)$ be an element. Then consider the inner automorphism $\phi$ such that $\phi(x)=g^{-1}xg$. Then clearly, $\phi$ acts as the identity on $G_i$ for $i\geq 2$ and $G_i/G_{i+1}$ for $i\geq 1$. However, this is not enough. Recall that our target is, $\phi$ should act like the identity on all possible sections $H/K$ where $\phi$ fixes $K$ and $H/K$ is elementary-abelian. The question is, are there $p$-groups, on which, using the inner automorphisms, one can build a secure MOR cryptosystem?
\section{Conclusion} This paper is a study of finite $p$-groups for the MOR cryptosystem. The aim of this paper was not to provide with a secure MOR cryptosystem. For that, one can look into the arXiv preprint~\cite{UnAyan}. The purpose of this paper is to theoretically justify what can one expect out of finite $p$-groups. There are two classes of automorphisms one should look at. One is $p$-automorphisms and the other is $p^\prime$-automorphisms. The case of $p^\prime$-automorphism has been resolved in this paper as follows: for abelian groups, it is the elementary-abelian $p$-groups. For non-abelian groups, one should use the extra-special $p$-groups of exponent $p$. However there are very interesting questions that are open for $p$-automorphisms. We point those out in this paper.
\bibliography{paper}

\providecommand{\bysame}{\leavevmode\hbox to3em{\hrulefill}\thinspace}
\providecommand{\MR}{\relax\ifhmode\unskip\space\fi MR }
\providecommand{\MRhref}[2]{%
  \href{http://www.ams.org/mathscinet-getitem?mr=#1}{#2}
}
\providecommand{\href}[2]{#2}
\begin{thebibliography}{10}

\bibitem{russ}
Yakov Berkovich, \emph{Groups of prime power order}, vol.~1, Walter de Gruyter,
  2008.

\bibitem{curran}
M.J. Curran and D.J. McCaughan, \emph{Central automorphisms of finite groups},
  Bulletin of the Australian Mathematical Society \textbf{34} (1986), 191--198.

\bibitem{gorenstein}
Daniel Gorenstein, \emph{Finite groups}, AMS, 1980.

\bibitem{hall-higman}
P.~Hall and Graham Higman, \emph{On the $p$-length of $p$-soluble groups and
  reduction theorems for the {B}urnside's problem}, Proceedings of the London
  Mathematical Society \textbf{6} (1956), no.~3.

\bibitem{silverbook}
Jeffrey Hoffstein, Jill Pipher, and Joseph~H. Silverman, \emph{An intrduction
  to mathematical cryptology}, Springer, 2008.

\bibitem{huppert}
B.~Huppert and N.~Blackburn, \emph{Finite {G}roups {II}}, Springer-{V}erlag,
  1982.

\bibitem{khukro}
E.I. Khukhro, \emph{$p$-{A}utomorphisms of finite $p$-groups}, Cambridge
  University Press, 1998.

\bibitem{koblitz}
Neal Koblitz, Alfred Menezes, and Scott Vanstone, \emph{The state of elliptic
  curve cryptography}, Designs, Codes and Cryptogrpahy \textbf{19} (2000),
  173--193.

\bibitem{green-brien}
C.R. Leedham-Green and E.A. O'Brien, \emph{Constructive recongnition of
  classical groups in odd characteristic}, Journal of Algebra \textbf{322}
  (2009), 833--881.

\bibitem{UnAyan}
Ayan Mahalanobis, \emph{The {MOR} cryptosystem and extra-special p-groups},
  http://arxiv.org/abs/1111.1043.

\bibitem{ayan3}
\bysame, \emph{A simple generalization of the {E}l{G}amal cryptosystem to
  non-abelian groups}, Communications in Algebra \textbf{36} (2008), no.~10,
  3880--3891.

\bibitem{ayan4}
\bysame, \emph{A simple generalization of the {E}l{G}amal cryptosystem to
  non-abelian groups {II}}, Communications in Algebra \textbf{40} (2012),
  no.~9, 3583--3596.

\bibitem{menezes1}
Alfred Menezes and Yi-Hong Wu, \emph{The discrete logarithm problem in
  {GL}$(n,q)$}, Ars Combinatorica \textbf{47} (1997), 23--32.

\bibitem{newman}
M.F. Newman, Werner Nickel, and Alice~C. Niemeyer, \emph{Descriptions of groups
  of prime-power order}, Journal of Symbolic computation \textbf{25} (1999),
  665--682.

\bibitem{roman}
Steven Roman, \emph{Advanced linear algebra}, third ed., Springer, 2008.

\bibitem{oliver}
Oliver Schirokauer, Damian Weber, and Thomas Denny, \emph{Discrete logarithm:
  the effectiveness of the index calculus method}, Algorithmic number theory
  (Talence, 1996), LNCS, vol. 1122, 1996, pp.~337--361.

\bibitem{winter}
David~L. Winter, \emph{The automorphism group of an extraspecial $p$-group},
  Rocky Mountain Journal of Mathematics \textbf{2} (1972), no.~2, 159--168.

\end{thebibliography}
\bibliographystyle{amsplain} 
\end{document}